\crefname{hypothesis}{Hypothesis}{Hypotheses}
\crefname{fact}{Fact}{Facts}
\title{Refinement-based Christoffel sampling for least squares approximation in non-orthogonal bases\thanks{Submitted to the editors DATE.}}
\author{Astrid Herremans\thanks{Department of Computer Science, KU Leuven, 3001 Leuven, Belgium (\email{astrid.herremans@kuleuven.be}).}
\and Ben Adcock\thanks{Department of Mathematics, Simon Fraser University, Burnaby, BC V5A 1S6, Canada
  (\email{ben\_adcock@sfu.ca}, \url{http://www.benadcock.org}).}}
\DeclareMathOperator*{\esssup}{ess\,sup}
\DeclareMathOperator*{\essinf}{ess\,inf}
\DeclareMathOperator{\SPAN}{span}
\DeclareMathOperator*{\argmin}{arg\,min}
\newcommand{\norm}[1]{\left\lVert#1\right\rVert}
\DeclareMathOperator{\Tr}{Tr}
\begin{document}

\maketitle

% REQUIRED
\begin{abstract}
We introduce a refinement-based Christoffel sampling (RCS) algorithm for least squares approximation in the span of a given, generally non-orthogonal set of functions $\Phi_n = \{\phi_1, \dots, \phi_n\}$. A standard sampling strategy for this problem is Christoffel sampling, which achieves near-best approximations in probability using only $\mathcal{O}(n \log(n))$ samples. However, it requires i.i.d.\ sampling from a distribution whose density is proportional to the inverse Christoffel function $k_n$, the computation of which requires an orthonormal basis. As a result, existing approaches for non-orthogonal bases $\Phi_n$ typically rely on costly discrete orthogonalization. We propose a new iterative algorithm, inspired by recent advances in approximate leverage score sampling, that avoids this bottleneck. Crucially, while the computational and memory cost of discrete orthogonalization scales with $\|k_n\|_{L^\infty(X)}$, the cost of our approach increases only logarithmically in $\|k_n\|_{L^\infty(X)}$. In addition, we account for finite-precision effects by considering a numerical variant of the Christoffel function, ensuring that the algorithm relies only on computable quantities. Alongside a convergence proof, we present extensive numerical experiments demonstrating the efficiency and robustness of the proposed method.
\end{abstract}

% REQUIRED
\begin{keywords}
Weighted least squares, sampling strategies, Christoffel function, non-orthogonal expansions, leverage scores
\end{keywords}

% REQUIRED
\begin{MSCcodes}
41A10, 41A63, 42C15, 65C05
\end{MSCcodes}

\section{Introduction}
We consider the problem of approximating a function $f \in L^2_\rho(X)$, defined on $X \subseteq \mathbb{R}^d$, using weighted least squares fitting in the span of a given set of functions $\Phi_n = \{\phi_1, \dots, \phi_n\}$
\begin{equation} \label{eq:weightedLS}
    \tilde{f} \in \argmin_{v \in \SPAN(\Phi_n)} \sum_{k=1}^m w_k \left\vert f(x_k) - v(x_k) \right\vert^2,
\end{equation}
where the set of sample points $\{x_k\}_{k=1}^m$ and the weights $\{w_k\}_{k=1}^m$ can be chosen freely. The objective is to attain near-best approximation errors while using a small number of sample points.

A common strategy is to use sample points that are drawn i.i.d.\ from some sampling distribution, allowing for an analysis using tools from random matrix theory. This led to the sampling distribution $\mu_n$, first proposed for (total degree) polynomial spaces~\cite{hampton2015coherence} and later generalized~\cite{cohen2017optimal}, which achieves a near-optimal approximation error in probability using only $m = \mathcal{O}(n \log n)$ samples. This is close to the optimal case of interpolation, which corresponds to $m=n$\footnote{In the introduction, we assume for simplicity of exposition that $\dim\left(\SPAN\left(\{\phi_i\}_{i=1}^n\right)\right) = n$,  i.e., that $\Phi_n$ forms a basis. This assumption is not required and is relaxed later.}. The sampling distribution $\mu_n$ is defined by
\[
    w_n d\mu_n = d\rho \qquad \text{with} \qquad w_n = \frac{n}{\sum_{i=1}^n \lvert u_i \rvert^2},
\]
where $\{u_i\}_{i=1}^n$ is an orthonormal basis for $\SPAN(\Phi_n)$. The function 
\[
    k_n = \sum_{i=1}^n \lvert u_i \rvert^2
\]
is known as the inverse of the \textit{Christoffel function}~\cite{nevai1986geza} and, hence, this sampling strategy is often referred to as \textit{Christoffel sampling}~\cite{adcock2024optimal}.

An important practical challenge is that sampling from $\mu_n$ requires evaluating the Christoffel function, which in turn typically requires access to an orthonormal basis. We are particularly interested in settings where such a basis is not available, i.e., when the given basis $\Phi_n$ is non-orthogonal. Non-orthogonal bases often arise when one incorporates prior knowledge about the target functions, such as known singularities, oscillations or other features that a standard polynomial basis would fail to capture efficiently. They also naturally appear when working on irregular domains. In addition, non-orthogonal bases can result from adaptive or problem-driven constructions, such as bases learned using machine learning techniques.

In earlier work~\cite{dolbeault2022optimal,adcock2020near,migliorati2021multivariate}, this issue was addressed by discretely orthogonalizing the basis $\Phi_n$ on a dense grid. More specifically, we can equivalently characterize the inverse Christoffel function $k_n$ using the non-orthogonal basis $\Phi_n$ by
\begin{equation} \label{eq:christfun}
    k_n(x) = \phi(x)^* G^{-1} \phi(x),
\end{equation}
where $\phi(x) = \begin{bmatrix} \phi_1(x) & \dots & \phi_n(x) \end{bmatrix}^\top$ and $G$ is the continuous Gram matrix associated with $\Phi_n$, defined by
\[
    (G)_{ij} = \langle \phi_i, \phi_j \rangle_{L^2_\rho(X)}.
\]   
Discrete orthogonalization amounts to replacing $G$ by a discrete Gram matrix $G_\ell$, such as
\[
    (G_\ell)_{ij} = \sum_{k=1}^\ell \frac{1}{\ell} \phi_i(t_k) \overline{\phi_j(t_k)},
\]
where the points $\{t_k\}_{k=1}^\ell$ are sampled i.i.d.\ from the measure $\rho$. Following~\cite{dolbeault2022optimal,adcock2024optimal}, one needs $\ell = \mathcal{O}(\|k_n\|_{L^\infty(X)} \log(n))$ for $G_\ell$ to approximate $G$ well with high probability. Although this is typically an offline cost---computed once for every $\Phi_n$ and independently of the function to be approximated---it can still be prohibitively expensive, especially when the Christoffel function $k_n$ is sharply peaked. Notably, this is precisely the regime where Christoffel sampling offers significant advantages over standard least squares fitting using samples drawn from $\rho$. When $\ell$ is too small, the efficiency of the resulting Christoffel sampling procedure is limited~\cite{trunschke2024optimal}.

We adopt a different strategy inspired by~\cite{cohen2015uniform}; Section~\ref{sec:discretesetting} explains how this discrete method relates to our continuous setting. We aim to construct a function $u$ that satisfies $u(x) \geq k_n(x), \forall x \in X$ while $\|u\|_{L^1_\rho(X)} \leq Cn$ for some small constant $C \geq 1$. When such a function is known, it suffices to draw $m = \mathcal{O}(\|u\|_{L^1_\rho(X)} \log(n)) = \mathcal{O}(n \log(n))$ samples from the sampling distribution $\mu_u$ defined by 
\[
    w_u d\mu_u = d\rho \qquad \text{with} \qquad w_u = \frac{\|u\|_{L^1_\rho(X)}}{u}
\]
in order to obtain near-optimal weighted least squares approximations in probability. We construct such a function $u$ efficiently using an iterative method in which, at each step, we sample points from the current $u$ and thereafter use these samples to update and improve $u$. 

In particular, we show that $\|u\|_{L^1_\rho(X)}$ decreases by a constant factor in each iteration until it is close to $\|k_n\|_{L^1_\rho(X)} = n$, using only $\mathcal{O}(n \log(n))$ samples per step. Starting from the initialization $u(x) = \|k_n\|_{L^\infty(X)}$, it follows that the total number of samples required to obtain an adequate $u$ is of the order $\mathcal{O}(\log(\|k_n\|_{L^\infty(X)}) n \log(n))$. Therefore, in contrast to the method described above, the computational complexity of this approach depends only logarithmically on $\|k_n\|_{L^\infty(X)}$, rather than linearly. Inspired by~\cite[Algorithm 3]{cohen2015uniform}, the algorithm is termed \textit{refinement-based Christoffel sampling} (RCS).

\subsection{Effects of finite-precision arithmetic}
Another issue related to least squares fitting with non-orthogonal bases is the influence of numerical rounding errors. Even though every basis is linearly independent from an analytical perspective, it can be indistinguishable from a linearly dependent set in finite-precision arithmetic. Numerically computed least squares fits using such a basis behave either explicitly or implicitly as $\ell^2$-regularized approximations with a regularization parameter $\epsilon \propto \epsilon_\text{mach} \sqrt{\|G\|_2}$ due to rounding errors~\cite{adcock2019frames,adcock2020frames,herremans2025sampling}. Whereas this numerical regularization has a negligible influence if the basis is (nearly) orthogonal, it becomes significant when $\kappa(G) \geq 1/\epsilon_\text{mach}^2$, in which case $\Phi_n$ is called numerically redundant~\cite{herremans2025sampling}. 

Using similar tools from random matrix theory, it was found in~\cite{herremans2025sampling} that the sampling distribution $\mu_n^\epsilon$ defined by
\[
    w_n^\epsilon d\mu_n^\epsilon = d\rho \qquad \text{with} \qquad w_n^\epsilon = \frac{n^\epsilon}{k_n^\epsilon}
\]
is of interest in this case. Here,
\[
    k_n^\epsilon(x) = \phi(x)^* (G+\epsilon^2 I)^{-1} \phi(x) \qquad \text{and} \qquad n^\epsilon = \sum_{i=1}^n \frac{\lambda_i(G)}{\lambda_i(G) + \epsilon^2} = \|k_n^\epsilon\|_{L^1_\rho(X)}
\]
are called the \textit{numerical inverse Christoffel function} and the \textit{numerical dimension}, respectively, and $\lambda_i(G)$ denotes the $i$-th eigenvalue of $G$. The numerical dimension $n^\epsilon$ now plays the role of the original dimension $n$, so that only $\mathcal{O}(n^\epsilon \log(n^\epsilon))$ samples are required for accurate least squares fitting, compared to $\mathcal{O}(n \log(n))$ when rounding errors are not accounted for. Note that $n^\epsilon \leq n$, reflecting the principle that numerical rounding errors can only reduce the dimension of the approximation space.

Even though in many practical scenarios one has $n^\epsilon \propto n$, such that accounting for the influence of rounding errors does not have a significant impact on the sampling complexity, the algorithm is based on the numerical inverse Christoffel function. A few observations clarify why this choice is necessary. First, it ensures that our algorithm only works with computable quantities. By constrast, evaluating the standard Christoffel function~\eqref{eq:christfun} may require inverting an arbitrarily ill-conditioned matrix, which is not feasible in finite precision. Moreover, observe that
\[
    G \preceq G + \epsilon^2 I \preceq \left( 1 + \frac{\epsilon^2}{\lambda_n(G)} \right) G \quad \Rightarrow \quad \left( 1 + \frac{\epsilon^2}{\lambda_n(G)} \right)^{-1} k_n \leq k_n^\epsilon \leq k_n,
\]
and, similarly for the numerical dimension $n^\epsilon$,
\[
   \frac{\lambda_n(G)}{\lambda_n(G) + \epsilon^2} \leq \frac{\lambda_i(G)}{\lambda_i(G) + \epsilon^2} \leq 1 \quad \Rightarrow \quad \left( 1 + \frac{\epsilon^2}{\lambda_n(G)} \right)^{-1} n \leq n^\epsilon \leq n.
\]
Thus, $k_n^\epsilon$ and $n^\epsilon$ deviate from $k_n$ and $n$, respectively, by more than a factor of two only when $\lambda_n(G) \leq \epsilon^2 \propto \lambda_1(G) \epsilon_{\text{mach}}^2$, i.e., when $\Phi_n$ is (nearly) numerically redundant. Conversely, when $\Phi_n$ is close to orthogonal, they behave almost identically to $k_n$ and $n$. This shows that the algorithm remains meaningful even when rounding errors are negligible. Finally, the numerical regularization parameter $\epsilon$ plays a crucial role in the convergence proof of the algorithm: the proof of Lemma~\ref{lm:weighting} heavily relies on the assumption that $\epsilon > 0$.

\subsection{Connection to the discrete setting} \label{sec:discretesetting}
Christoffel sampling is widely recognized as being closely related to leverage score sampling, a well-established technique in numerical linear algebra~\cite{drineas2006sampling, adcock2024optimal}. Given an $m \times n$ matrix $A$ with $m > n$, the $j$-th statistical leverage score $\ell_j$ is defined by
\[
    \ell_j = \sum_{i=1}^n \lvert (U)_{ji}\rvert^2 \qquad 1 \leq j \leq m,
\]
where $U$ denotes the $m \times n$ matrix containing the left singular vectors of the matrix $A$~\cite{drineas2012fast}. Leverage scores identify which rows of a matrix are most important and are typically used to subsample a tall-and-skinny matrix $A$ before solving a least squares problem $Ax \approx b$. Other applications include, but are not limited to, low-rank approximation, matrix completion and kernel-based machine learning~\cite{drineas2016randnla}. 

From the definition of $\ell_j$, it is clear that leverage scores are a discrete analogue of the inverse Christoffel function $k_n$. In particular, the continuous variable $x \in X$ is substituted by a discrete index $j$, where $1 \leq j \leq m$. Furthermore, the Christoffel function serves a similar role in approximation theory as the leverage scores in numerical linear algebra, namely in subsampling the $\infty$-dimensional regression problem 
\[
    \begin{bmatrix} \phi_1 & \phi_2 & \dots & \phi_n \end{bmatrix} c \approx f.
\]
The numerical Christoffel function $k_n^\epsilon$ can similarly be seen as a continuous analogue of ridge leverage scores~\cite{herremans2025sampling}, which arise in the context of $\ell^2$-regularized least squares problems~\cite{alaoui2015fast}.

Challenges similar to those encountered in Christoffel sampling also appear in the context of leverage scores. In particular, computing leverage scores requires the full singular value decomposition of $A$, which is as computationally expensive as solving the original least squares problem. Consequently, there has been substantial research on fast computation---or rather,  approximation---of leverage scores. One approach introduced in~\cite{cohen2015uniform,cohen2017input} consists of computing a tight upper bound to the (rigde) leverage scores via an efficient iterative scheme. This approach can be extended to the continuous setting and forms the foundation of this paper.

\subsection{Other related work}
Other methods have been proposed for efficiently computing the Christoffel function from a non-orthogonal basis. For example, in ~\cite[\S4]{dolbeault2022optimal} a multilevel strategy is introduced that approximates the inverse Christoffel function through successive refinements based on nested subspaces. However, this approach relies on rather strong assumptions about these subspaces and does not account for the effects of finite-precision arithmetic. Another iterative method is presented in~\cite{trunschke2024optimal}, which requires the strong assumption $\lambda_{\min}(G) \gg \epsilon_{\text{mach}}$---where $\lambda_{\min}$ indicates the smallest nonzero eigenvalue---for computability. 

Moreover, we note that optimizations have been studied to improve upon the Christoffel sampling scheme. These strategies reduce the number of required samples from $\mathcal{O}(n\log(n))$ to $\mathcal{O}(n)$, typically by leaving the i.i.d.\ setting. For instance, in~\cite{bartel2023constructive} a subsampling scheme is introduced that is able to compress a good sampling set, while the algorithm in~\cite{dolbeault2024randomized} is designed to directly sample from a distribution that incorporates both the Christoffel function and effective resistance. Implementing the latter can be done via rejection sampling with draws from the Christoffel function, as mentioned in~\cite[\S5]{dolbeault2024randomized}, which makes our approach directly applicable in this setting as well.

The link between leverage scores and the Christoffel function is not new; a more extensive comparison can be found in~\cite[\S2]{meyer2023near}. Furthermore, in the discrete setting many more algorithms have been proposed to efficiently approximate leverage scores, such as~\cite{drineas2012fast} which uses random sketching to speed up the computations. Viewing the object $\begin{bmatrix} \phi_1 & \phi_2 & \dots & \phi_n \end{bmatrix}$ as the limit of a tall-and-skinny matrix was popularized in the Matlab package chebfun, where it is referred to as a quasimatrix~\cite{driscoll2014chebfun}.

\subsection{Overview of the paper}
The remainder of the paper is organized as follows. In Section~\ref{sec:symbols}, the symbols and notation used throughout the paper are summarized. In Section~\ref{sec:algorithm}, we introduce the proposed iterative algorithm, outline its main objective and provide an initial discussion of its computational performance. Section~\ref{sec:theory} presents a theoretical analysis of the method, including a proof of its convergence properties. Section~\ref{sec:numericalexperiments} is devoted to extensive numerical experiments that illustrate the efficiency and robustness of our approach. Finally, Section~\ref{sec:conclusion} lists some conclusions and open problems.

\section{Symbols and notation} \label{sec:symbols}
Here, we collect the main symbols used throughout the paper.
Let $X \subseteq \mathbb{R}^d$ denote the domain of interest, equipped with a probability measure $\rho$. The inner product on $L^2_\rho(X)$ is defined by
\[
    \langle f,g \rangle_{L^2_\rho(X)} = \int_X f \;\overline{g} \;d\rho.
\]
We introduce the following sampling measure based on a strictly positive function $u$:
\begin{equation} 
    w_u d\mu_u = d\rho \qquad \text{with} \qquad w_u = \frac{\|u\|_{L^1_\rho(X)}}{u}. \label{eq:usampler}
\end{equation}
Note that this corresponds to the choices
\[
    x_k \overset{\mathrm{iid}}{\sim} \mu_u \qquad \text{and} \qquad w_k = w_u(x_k)/m
\]
in~\eqref{eq:weightedLS}. $\Phi_n = \{\phi_1, \dots, \phi_n\} \subset L^2_\rho(X)$ is a given set of non-orthogonal functions, which may even be linearly dependent. The vector of evaluations of $\Phi_n$ at a point $x \in X$ is denoted by $\phi(x) = \begin{bmatrix}\phi_1(x) & \dots & \phi_n(x)\end{bmatrix}^\top \in \mathbb{C}^n$. The continuous Gram matrix associated with $\Phi_n$ is an $n \times n$ matrix $G$ defined by 
\[
    (G)_{ij} = \langle \phi_i, \phi_j \rangle_{L^2_\rho(X)}.
\]
The numerical inverse Christoffel function equals  
\[
    k_n^\epsilon(x) = \phi(x)^* (G + \epsilon^2 I)^{-1} \phi(x)
\]
where $\epsilon > 0$ models finite-precision rounding and $^*$ denotes the Hermitian conjugate. The numerical dimension is defined by 
\[
    n^\epsilon = \sum_{i=1}^n \frac{\lambda_i(G)}{\lambda_i(G) + \epsilon^2},
\]
where ${\lambda_i(G)}$ are the eigenvalues of $G$. For Hermitian matrices $A$ and $B$, we use $A \preceq B$ to denote the Loewner order, i.e., $B-A$ is positive semidefinite. 

\section{Description of the algorithm}\label{sec:algorithm}
\subsection{Main objective}
The goal is to identify a small number of samples $\{x_k\}_{k=1}^m$ and weights $\{w_k\}_{k=1}^m$ so that the discrete least squares fit to a function $f$ in a given set $\Phi_n$ has near-optimal approximation error. Our approach to obtain such samples is to compute a function $u$ that satisfies 
\begin{equation} 
    u(x) \geq k_n^\epsilon(x), \; \forall x \in X \qquad \text{while} \qquad \|u\|_{L^1_\rho(X)} \leq Cn^\epsilon  \label{eq:urequirements}
\end{equation}
for some small constant $C \geq 1$ and numerical parameter $\epsilon \propto  \epsilon_\text{mach}\sqrt{\|G\|_2}$. In this section, we verify that samples drawn from the associated measure $\mu_u$~\eqref{eq:usampler} indeed result in stable and accurate least squares fits, both when $\Phi_n$ is numerically redundant and when numerical rounding errors are negligble, i.e., if $\Phi_n$ is not too far from being orthogonal.

Most importantly, if the sample points $\{x_k\}_{k=1}^m$ are drawn independently from $\mu_u$, then, with high probability, the discrete Gram matrix $G_m$ defined by
\[
    (G_m)_{ij} = \sum_{k=1}^m \frac{w_u(x_k)}{m} \phi_i(x_k) \overline{\phi_j(x_k)},
\]
closely approximates the continuous Gram matrix $G$ using only $m = \mathcal{O}(n^\epsilon \log(n^\epsilon))$ samples. This is formalized in Lemma~\ref{lm:matrixineq}, which generalizes~\cite[Theorem 10]{herremans2025sampling}.

\begin{lemma} \label{lm:matrixineq}
    Let $\Phi_n = \{\phi_i\}_{i=1}^n \subset L^2_\rho(X)$ be a set of functions, where $X \subseteq \mathbb{R}^d$, $\epsilon \geq 0$, and $u$ be an integrable function satisfying 
    \[
        u(x) \geq k_n^\epsilon(x) \qquad \forall x \in X.
    \]
    Assuming that $k_n^\epsilon(x) > 0, \forall x \in X$, let $\Delta > 0$ and $\gamma < 1$ and generate
    \[
        m \geq c_\Delta \log(16 n^\epsilon/\gamma) \|u\|_{L^1_\rho(X)}
    \]
    samples $\{x_k\}_{k=1}^m$ i.i.d.\ from $\mu_u$~\eqref{eq:usampler}, where $c_\Delta = \frac{1+2\Delta/3}{\Delta^2/2}$. Then, assuming $\|G\|_2 \geq \epsilon^2$, one has
    \begin{equation} \label{eq:matrixineq}
        (1-\Delta)(G+\epsilon^2 I) \preceq G_m + \epsilon^2 I \preceq (1+\Delta) (G + \epsilon^2 I)
    \end{equation}
    with probability at least $1-\gamma$.
\end{lemma}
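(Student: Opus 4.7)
My plan is to prove the lemma by matrix concentration after a preconditioning change of variables, directly generalizing the argument in \cite[Theorem 10]{herremans2025sampling} (which treats the special case $u = k_n^\epsilon$). The only new ingredient is that the uniform bound in Step 2 uses the hypothesis $u \geq k_n^\epsilon$ rather than the equality case.

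\emph{Step 1 (Reduction).} I would set $M := (G+\epsilon^2 I)^{-1/2}$ and conjugate~\eqref{eq:matrixineq} by $M$; the two-sided Loewner inequality then becomes the single operator-norm bound $\| M(G_m - G) M \|_2 \leq \Delta$. Introduce the i.i.d.\ rank-one PSD matrices
\[
    Z_k := \frac{w_u(x_k)}{m}\, M\phi(x_k)\phi(x_k)^* M, \qquad 1 \leq k \leq m,
\]
so that $\sum_{k=1}^m Z_k = M G_m M$, and, by substituting~\eqref{eq:usampler}, $\mathbb{E}[Z_k] = MGM/m$. The task reduces to bounding $\|\sum_k (Z_k - \mathbb{E}[Z_k])\|_2$.

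\emph{Step 2 (Uniform and variance estimates).} Each $Z_k$ is rank-one with largest eigenvalue $w_u(x_k) k_n^\epsilon(x_k)/m = \|u\|_{L^1_\rho(X)} k_n^\epsilon(x_k)/(m\, u(x_k))$, which is uniformly bounded by $R := \|u\|_{L^1_\rho(X)}/m$ precisely because $u \geq k_n^\epsilon$. For the variance proxy, the inequality $Z_k^2 \preceq R\, Z_k$ yields $\sum_k \mathbb{E}[Z_k^2] \preceq R\, MGM$, hence $v := \|\sum_k \mathbb{E}[Z_k^2]\|_2 \leq R$ and, crucially,
\[
    \Tr\bigl(\sum_k \mathbb{E}[Z_k^2]\bigr) \leq R\, \Tr(MGM) = R \sum_{i=1}^n \frac{\lambda_i(G)}{\lambda_i(G) + \epsilon^2} = R\, n^\epsilon.
\]
Thus the intrinsic dimension of the variance proxy is at most $n^\epsilon$.

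\emph{Step 3 (Conclusion via intrinsic-dimension matrix Bernstein).} Applying the intrinsic-dimension matrix Bernstein inequality to the centered sum $\sum_k (Z_k - \mathbb{E}[Z_k])$ (centering inflates $R$ by a factor of at most $2$, since $\|\mathbb{E}[Z_k]\|_2 = \|MGM\|_2/m \leq 1/m \leq 2R$ under $\|G\|_2 \geq \epsilon^2$), I obtain a tail bound of the form
\[
    \mathbb{P}\bigl(\| M(G_m - G) M\|_2 > \Delta\bigr) \leq C\, n^\epsilon \exp\!\left(-\frac{\Delta^2/2}{v + R\Delta/3}\right) \leq C\, n^\epsilon \exp\!\left(-\frac{m}{c_\Delta\, \|u\|_{L^1_\rho(X)}}\right),
\]
with $c_\Delta = (1+2\Delta/3)/(\Delta^2/2)$. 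The hypothesized lower bound on $m$ then makes the right-hand side at most $\gamma$; the absolute constant $C$ is absorbed into the factor $16$ in the logarithm using $\|G\|_2 \geq \epsilon^2$ (which forces $n^\epsilon \geq 1/2$).

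The main obstacle is the intrinsic-dimension computation in Step 2: ordinary matrix Chernoff would give a $\log n$ factor, defeating the whole purpose of working with the numerical dimension. Replacing $\log n$ by $\log n^\epsilon$ requires exploiting that the preconditioned expectation $\sum_k \mathbb{E}[Z_k] = MGM$ has trace $n^\epsilon$ (not $n$), and this is only possible because the regularization $\epsilon^2 I > 0$ prevents $MGM$ from collapsing to the identity.
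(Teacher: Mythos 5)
Your proposal is correct and follows essentially the same route as the paper: the paper's proof simply defers the concentration machinery to the cited Theorem 10 of \texttt{herremans2025sampling} and isolates the one new ingredient, namely the uniform bound $\|w_u k_n^\epsilon\|_{L^\infty(X)} \leq \|u\|_{L^1_\rho(X)}$ coming from $u \geq k_n^\epsilon$, which is precisely your Step 2. Your write-up just makes the underlying preconditioned intrinsic-dimension matrix Bernstein argument explicit, including the correct role of $\|G\|_2 \geq \epsilon^2$ in controlling the intrinsic dimension and the constants.
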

\begin{proof}
    This follows from a straightforward generalization of the proof of~\cite[Theorem 10]{herremans2025sampling} using a generic $\Delta$ instead of $\Delta = 1/2$, and from
    \[
        \|w_u k^\epsilon_n\|_{L^\infty(X)} = \esssup_{x \sim \rho} \frac{k_n^\epsilon(x) \|u\|_{L^1_\rho(X)}}{u(x)} \leq \|u\|_{L^1_\rho(X)}.
    \]
\end{proof}

\begin{remark}
    Throughout this paper we will assume that $k_n^\epsilon(x) > 0, \forall x \in X$. This is a standard assumption in Christoffel sampling schemes and is satisfied if at every point $x \in X$ at least one function $\phi_i$ is nonzero. This condition is mild and, for example, holds whenever the constant function is included. An alternative is to slightly modify the Christoffel density, as explained in~\cite[\S6.1]{adcock2024optimal}.
\end{remark}

Using~\cite[Theorem 2]{herremans2025sampling}, this immediately implies that numerically computed least squares fits are stable and accurate.

\begin{corollary} \label{corr1}
    Under the same conditions as Lemma~\ref{lm:matrixineq}, given a function $f \in L^2_\rho(X)$ one has
    \[
        \norm{f-\sum_{i=1}^n \hat{c}_i \phi_i}_{L^2_\rho(X)} \leq \left(1 + \frac{1+\sqrt{2}}{\sqrt{1-\Delta}} \right) \inf_{c \in \mathbb{C}^n} \norm{ f - \sum_{i=1}^n c_i \phi_i}_{L^\infty(X)} + \epsilon \|c\|_2
    \]
     with probability at least $1-\gamma$, where 
     \[
        \hat{c} \in \argmin_{c \in \mathbb{C}^n} \sum_{k=1}^m \frac{w_u(x_k)}{m} \left\vert f(x_k) - \sum_{i=1}^n c_i \phi_i(x_k) \right\vert^2 + \epsilon^2 \|c\|_2^2.
     \]
\end{corollary}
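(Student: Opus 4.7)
The proof is a direct application of Lemma~\ref{lm:matrixineq} combined with the stability--accuracy bound of \cite[Theorem 2]{herremans2025sampling}, and my plan proceeds in three steps.

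First, I would invoke Lemma~\ref{lm:matrixineq}: under the stated hypotheses on $m$, $\Delta$, and $\gamma$, with probability at least $1-\gamma$ the spectral equivalence
\[
(1-\Delta)(G + \epsilon^2 I) \preceq G_m + \epsilon^2 I \preceq (1+\Delta)(G + \epsilon^2 I)
\]
holds, and I would condition on this event throughout.

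Second, I would rewrite the minimization defining $\hat{c}$ as the ridge-regularized weighted least squares problem whose normal equations are $(G_m + \epsilon^2 I)\hat{c} = b_m$, with $(b_m)_i = m^{-1} \sum_{k=1}^m w_u(x_k) f(x_k) \overline{\phi_i(x_k)}$. This is exactly the framework to which \cite[Theorem 2]{herremans2025sampling} applies: given the spectral equivalence above together with the i.i.d.\ draws from the Christoffel-type measure $\mu_u$, that theorem delivers a bound of the form ``stability constant times best-approximation error plus $\epsilon \|c\|_2$''. Plugging in the quantitative constants from the spectral equivalence yields the factor $1 + (1+\sqrt{2})/\sqrt{1-\Delta}$, where the $\sqrt{1-\Delta}$ enters from inverting the lower side of the spectral equivalence in the $L^2_\rho$ error analysis. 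The $L^\infty(X)$ norm on the right-hand side is natural: controlling the sampling error at each drawn $x_k$ requires a pointwise bound on $f - \sum_i c_i \phi_i$, and the estimate $\|w_u k_n^\epsilon\|_{L^\infty(X)} \leq \|u\|_{L^1_\rho(X)}$ already used in the proof of Lemma~\ref{lm:matrixineq} is what converts this pointwise control into the stated constant.

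The main obstacle is purely bookkeeping: \cite[Theorem 2]{herremans2025sampling} is stated for the fixed value $\Delta = 1/2$, and to obtain the generic $\Delta$-dependent constant advertised here one must retrace its proof with an arbitrary $\Delta < 1$, tracking where each $1/2$ originated from the lower spectral bound. No conceptually new ingredient beyond \cite{herremans2025sampling} is needed; in particular the proof does not need to distinguish whether $\Phi_n$ is numerically redundant, since the regularization $\epsilon^2 I$ makes $G + \epsilon^2 I$ invertible uniformly.
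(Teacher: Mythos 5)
Your proposal matches the paper's own (unwritten) argument exactly: the paper proves this corollary simply by combining Lemma~\ref{lm:matrixineq} with \cite[Theorem 2]{herremans2025sampling}, which is precisely your plan, including the observation that the cited theorem must be retraced with a generic $\Delta$ in place of $\Delta = 1/2$. No gap; this is the same route.
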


At first glance these results seem tailored to the case where $\Phi_n$ is numerically redundant, yet it is important to note that this is not the case. As mentioned in the introduction, the influence of the numerical regularization parameter $\epsilon$ is only significant when $\Phi_n$ is numerically redundant. Hence, the results from Lemma~\ref{lm:matrixineq} are also useful in the more standard setting where numerical rounding errors are negligible. In particular,~\eqref{eq:matrixineq} implies that
\begin{equation} \label{eq:matrixineq2}
    \left( 1 - \widetilde{\Delta} \right) G \preceq G_m \preceq \left( 1 + \widetilde{\Delta} \right) G \qquad \text{with} \qquad \widetilde{\Delta} = \Delta \left( 1 + \frac{\epsilon^2}{\lambda_1(G)}\right),
\end{equation}
where the term $\epsilon^2 / \lambda_1(G)$ is negligble when $\Phi_n$ is close to orthogonal. From here, it follows that the discrete least squares error is near-optimal using $\mathcal{O}(n^\epsilon \log(n^\epsilon)) = \mathcal{O}(n \log(n))$ samples, by arguments analogous to those used in standard results such as~\cite[Theorem 2.1]{cohen2017optimal}.

\begin{corollary} \label{corr2}
    Under the same conditions as Lemma~\ref{lm:matrixineq}, given a function $f \in L^2_\rho(X)$ one has
    \[
        \norm{f-\tilde{f}}_{L^2_\rho(X)} \leq \left(1 + \frac{1}{\sqrt{1-\widetilde{\Delta}}} \right) \inf_{v \in \SPAN(\Phi_n)} \norm{ f - v}_{L^\infty(X)}
    \]
     with probability at least $1-\gamma$, where $\widetilde{\Delta}$ as in~\eqref{eq:matrixineq2} and
     \[
        \tilde{f} \in \argmin_{v \in \SPAN(\Phi_n)} \sum_{k=1}^m \frac{w_u(x_k)}{m} \left\vert f(x_k) - v(x_k) \right\vert^2.
     \]
\end{corollary}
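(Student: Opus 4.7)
The plan is to follow the standard quasi-optimality argument for weighted least squares, as in~\cite[Theorem 2.1]{cohen2017optimal}, with the required discrete-to-continuous norm control supplied by Lemma~\ref{lm:matrixineq} together with~\eqref{eq:matrixineq2}. The first step is to translate~\eqref{eq:matrixineq2} into a two-sided norm comparison on $\SPAN(\Phi_n)$: writing $v = \sum_i c_i \phi_i$ gives $\|v\|_{L^2_\rho(X)}^2 = c^\ast G c$ and $\|v\|_m^2 := \sum_{k=1}^m \tfrac{w_u(x_k)}{m}|v(x_k)|^2 = c^\ast G_m c$, so the Loewner inequality yields
\[
    (1-\widetilde{\Delta})\|v\|_{L^2_\rho(X)}^2 \leq \|v\|_m^2 \leq (1+\widetilde{\Delta})\|v\|_{L^2_\rho(X)}^2, \qquad \forall v \in \SPAN(\Phi_n),
\]
on the same high-probability event as Lemma~\ref{lm:matrixineq}.

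Next, let $v^\ast \in \SPAN(\Phi_n)$ attain the $L^\infty$-best-approximation infimum. Splitting $f - \tilde{f} = (f - v^\ast) + (v^\ast - \tilde{f})$ and using the triangle inequality gives
\[
    \|f - \tilde{f}\|_{L^2_\rho(X)} \leq \|f - v^\ast\|_{L^2_\rho(X)} + \|v^\ast - \tilde{f}\|_{L^2_\rho(X)}.
\]
The first summand is immediately bounded by $\|f - v^\ast\|_{L^\infty(X)}$ since $\rho$ is a probability measure. For the second, I exploit that $v^\ast - \tilde{f} \in \SPAN(\Phi_n)$ to pass to the discrete norm via the equivalence above, and then use that $\tilde{f}$ is the orthogonal projection of $f$ onto $\SPAN(\Phi_n)$ with respect to $\langle \cdot,\cdot \rangle_m$: the Pythagorean identity gives $\|v^\ast - f\|_m^2 = \|v^\ast - \tilde{f}\|_m^2 + \|\tilde{f} - f\|_m^2$, whence $\|v^\ast - \tilde{f}\|_m \leq \|v^\ast - f\|_m$. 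This produces the chain
\[
    \|v^\ast - \tilde{f}\|_{L^2_\rho(X)} \leq \frac{1}{\sqrt{1-\widetilde{\Delta}}}\|v^\ast - \tilde{f}\|_m \leq \frac{1}{\sqrt{1-\widetilde{\Delta}}}\|v^\ast - f\|_m,
\]
and, bounding $\|v^\ast - f\|_m$ by $\|v^\ast - f\|_{L^\infty(X)}$, recovers exactly the stated constant $1 + (1-\widetilde{\Delta})^{-1/2}$.

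The main obstacle is the very last step, namely moving from the discrete norm $\|v^\ast - f\|_m$ to the $L^\infty$ norm. The pointwise bound $|v^\ast(x) - f(x)| \leq \|v^\ast - f\|_{L^\infty(X)}$ only yields $\|v^\ast - f\|_m^2 \leq \|v^\ast - f\|_{L^\infty(X)}^2 \cdot \tfrac{1}{m}\sum_k w_u(x_k)$, and the weighted scalar average $\tfrac{1}{m}\sum_k w_u(x_k)$ has expectation $1$ but is not deterministically bounded. A rigorous in-probability statement therefore augments the good event with a concentration bound on this scalar sum, an adjustment that is standard in the~\cite{cohen2017optimal} framework, incurs only an additional constant factor in the sample-size requirement, and does not change the structure of the final estimate. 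The remaining steps are routine and introduce no further randomness.
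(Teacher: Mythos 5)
Your write-up reconstructs exactly the argument the paper has in mind: the paper gives no proof of this corollary, deferring to the standard quasi-optimality chain of~\cite[Theorem 2.1]{cohen2017optimal}, and your steps --- the two-sided norm equivalence on $\SPAN(\Phi_n)$ obtained from~\eqref{eq:matrixineq2}, the split $f-\tilde f=(f-v^\ast)+(v^\ast-\tilde f)$, and the Pythagorean inequality $\norm{v^\ast-\tilde f}_m\le\norm{f-v^\ast}_m$ for the discrete projection --- are all correct on the single event of Lemma~\ref{lm:matrixineq} (with the implicit requirement $\widetilde{\Delta}<1$ so that $G_m$ is definite and the constant is finite). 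The subtlety you flag in the last step is genuine and is the only delicate point: $\tfrac1m\sum_k w_u(x_k)$ has expectation $1$ under $\mu_u$ but is not almost surely bounded by $1$, so $\norm{f-v^\ast}_m\le\norm{f-v^\ast}_{L^\infty(X)}$ is not deterministic. The way this is usually closed, and the reading under which the corollary holds with exactly probability $1-\gamma$ and no extra event, is to bound the discrete term by the weighted uniform norm, $\norm{f-v^\ast}_m\le\esssup_{x\sim\rho}\sqrt{w_u(x)}\,\lvert f(x)-v^\ast(x)\rvert$, which \emph{is} deterministic since each summand is bounded by its essential supremum; the plain $L^\infty$ norm in the statement should be read as shorthand for this weighted norm. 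Your alternative repair --- adding a concentration bound on the scalar sum at the cost of enlarging the failure probability --- also works. Either way, your proof is otherwise complete and matches the intended route.
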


\subsection{Iterative scheme}
The core strategy for constructing a function $u$ that satisfies~\eqref{eq:urequirements} is an adaptation of the (ridge) leverage score sampling algorithm introduced in~\cite{cohen2015uniform,cohen2017input} to the continuous setting. Suppose we have a candidate function $u$ that upper bounds $k_n^\epsilon$ but with $\|u\|_{L^1_\rho(X)} \gg n^\epsilon$. Drawing fewer than $\mathcal{O}(\|u\|_{L^1_\rho(X)} \log(n^\epsilon))$ samples from the associated measure $\mu_u$~\eqref{eq:usampler} is generally insufficient for accurate least squares fitting. However, these samples can still be used to improve $u$---that is, to construct a new function $u^{(new)}$ that still upper bounds $k_n^\epsilon$ but has a reduced $L_\rho^1(X)$-norm. This procedure is applied iteratively, producing successively tighter upper bounds to $k_n^\epsilon$ until the requirement~\eqref{eq:urequirements} is met. 

In each iteration, we update the function $u$ as follows:
\begin{equation} \label{eq:unew}
    u^{(new)}(x) = \min \left(u(x), \; (1+\Delta) \phi(x)^* (A^*A + \epsilon^2 I)^{-1} \phi(x)\right).
\end{equation}
Here, $A$ consists of $\lceil \alpha m + 1 \rceil$ rows of the $m \times n$ matrix $A_m$ defined by
\[
    (A_m)_{kj} = \sqrt{\frac{w_u(x_k)}{m}} \overline{\phi_j(x_k)} = \frac{\overline{\phi_j(x_k)}}{\sqrt{C_1 u(x_k) \log(n^\epsilon)}} \qquad \text{where} \qquad x_k \overset{\mathrm{iid}}{\sim} \mu_u
\]
with
\[
    m = C_1 \|u\|_{L^1_\rho(X)} \log(n^\epsilon)
\]
for some parameters $\Delta, \alpha \in (0,1]$ and $C_1 \geq 1$. Importantly, since each row of $A_m$ is independent, constructing $A$ only requires drawing $\lceil \alpha m + 1 \rceil$ samples. In other words: $A_m$ is never formed explicitly.

Figure~\ref{fig:schematic} illustrates how this construction ensures that $u^{(new)}$ remains an upper bound to $k_n^\epsilon$ with high probability. According to Lemma~\ref{lm:matrixineq}, for sufficiently large $C_1$,
\[ 
    A_m^*A_m + \epsilon^2 I \preceq (1+\Delta)(G + \epsilon^2 I)
\]
with high probability. Since $A$ is a submatrix of $A_m$, it follows that
\[ 
    A^*A + \epsilon^2 I \preceq A_m^*A_m + \epsilon^2 I \preceq (1+\Delta)(G + \epsilon^2 I)
\] 
and, hence,
\[
    (1+\Delta) \phi(x)^* (A^*A + \epsilon^2 I)^{-1} \phi(x) \geq k_n^\epsilon(x) \qquad \forall x \in X
\]
with high probability. Assuming $u \geq k_n^\epsilon$ holds with high probability, this ensures that $u^{(new)} \geq k_n^\epsilon$ with high probability as well. 

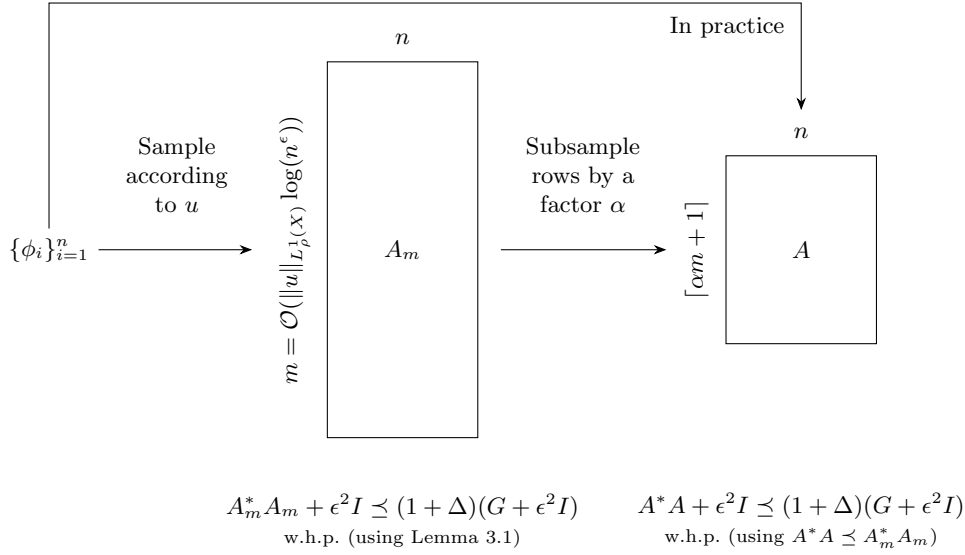
\begin{figure}
    \centering
    \begin{tikzpicture}[
        every node/.style={font=\small},
        matrix/.style={draw, minimum width=2cm, minimum height=5cm},
        reducedmatrix/.style={draw, minimum width=2cm, minimum height=2.5cm},
        >=Stealth
    ]
    \node (set) at (0,0) {$\{\phi_i\}_{i=1}^n$};
    \node at (1.7,1) {\parbox{2.2cm}{\centering Sample according \\ to $u$}};
    \draw[->] (set) -- ++(2.7,0);

    \node[matrix] (tildeA) at (4.7,0) {};
    \node[above=0.1cm of tildeA] {$n$};
    \node[rotate=90, anchor=south] at ([xshift=-0.1cm]tildeA.west) {$m = \mathcal{O}( \|u\|_{L^1_\rho(X)} \log(n^\epsilon))$};
    \node at (tildeA) {$A_m$};

    \node at (7.1,1) {\parbox{2.2cm}{\centering Subsample rows by a factor $\alpha$}};
    \draw[->] (tildeA.east) + (0.3,0) -- ++(2.5,0);
    \node[reducedmatrix] (hatA) at (10,0) {};
    \node[above=0.1cm of hatA] {$n$};
    \node[rotate=90, anchor=south] at ([xshift=-0.1cm]hatA.west) {$\lceil \alpha m + 1 \rceil$};
    \node at (hatA) {$A$};

    \node at (4.7,-3.6) {\parbox{5cm}{\centering $A_m^*A_m + \epsilon^2 I \preceq (1+\Delta)(G + \epsilon^2 I)$ \\ \begin{scriptsize} w.h.p.\ (using Lemma~\ref{lm:matrixineq}) \end{scriptsize}}};
    \node at (10,-3.6) {\parbox{4.5cm}{\centering $A^*A + \epsilon^2 I \preceq (1+\Delta)(G + \epsilon^2 I)$ \\ \begin{scriptsize} w.h.p.\ (using $A^*A \preceq A_m^*A_m$) \end{scriptsize}}};

    \draw[->] (set.north) -- ++(0,3) -| node[above, pos=0.9, xshift=-28pt, yshift=16pt, font=\small] {In practice} ([yshift=0.6cm]hatA.north);

    \end{tikzpicture}
    \caption{Schematic displaying the theoretical construction that motivates the definition of $A$ and, more specifically, why $u^{(new)} \geq k_n^\epsilon$ with high probability. Importantly, the matrix $A_m$ is never formed explicitly, so that only $\lceil \alpha m + 1 \rceil$ samples are drawn in each step.}
    \label{fig:schematic}
\end{figure}

How do we choose $\alpha$ so that $u$ improves significantly in each iteration while keeping the cost moderate? This is investigated in Section~\ref{sec:theory} and, more specifically, in Theorem~\ref{thm:iterativescheme}, which provides a bound on $\|u^{(new)}\|_{L^1_\rho(X)}$ as a function of $\alpha$. If we choose 
\[
    \alpha = \frac{C_2 n^\epsilon}{C_1 \|u\|_{L^1_\rho(X)}}
\]
for some constant $C_2 \geq 1$, then the amount of samples drawn in each iteration equals 
\[
    \lceil \alpha m + 1 \rceil = \lceil C_2 n^\epsilon \log(n^\epsilon) + 1 \rceil = \mathcal{O}(n^\epsilon \log(n^\epsilon))
\]
while
\[
    \|u^{(new)}\|_{L^1_\rho(X)} \leq \left( \frac{2}{1-\Delta} \right) \frac{n^\epsilon}{\alpha} = \left( \frac{2}{1-\Delta} \right) \frac{C_1}{C_2} \|u\|_{L^1_\rho(X)}
\]
with high probability. Thus, for given $C_1$ and $\Delta$, choosing $C_2$ appropriately ensures a geometric decrease in $L_\rho^1(X)$-norm of $u$, using only $\mathcal{O}(n^\epsilon \log(n^\epsilon))$ samples in each iteration. Note that we do not have to compute $m$ or $\alpha$; we simply draw $\lceil \alpha m + 1 \rceil$ samples in each iteration.

The complete procedure is given in Algorithm~\ref{alg}. A couple of remarks are in order. First, the bound above only applies when $\lceil \alpha m + 1 \rceil \leq m$, which roughly translates to $\alpha \leq 1$. Hence, we terminate the algorithm when $\alpha > 1$, i.e., when 
\[
    \|u\|_{L^1_\rho(X)} < \frac{C_2 n^\epsilon}{C_1}.
\]
At this point, the requirements in~\eqref{eq:urequirements} are satisfied for $C = C_2/C_1$ with high probability, implying that the number of samples outputted by the algorithm can be bounded from above by
\[
    C_1 \|u\|_{L^1_\rho(X)} \log(n^\epsilon) \leq C_2 n^\epsilon \log(n^\epsilon).
\]
Second, we exploit that $u$ can be evaluated efficiently for multiple values of $x$ via
\begin{equation} \label{eq:thinqr}
    \phi(x)^* (A^*A + \epsilon^2 I)^{-1} \phi(x) = \norm{ (R^*)^{-1} \phi(x) }_2^2,
\end{equation}
where
\[
    QR = \begin{bmatrix} A \\ \epsilon I_{n \times n} \end{bmatrix}
\]
is a (thin) QR-decomposition. Hence, $u$ is updated by storing $R$. 

{
\renewcommand{\baselinestretch}{1.3}\normalsize % I stretch the lines for readability
\begin{algorithm}
\caption{Refinement-based Christoffel Sampling (RCS)}
\label{alg}
  \begin{algorithmic}
    \Require $\Phi_n \subset L^2_\rho(X)$, (upper bounds on) $\|k_n^\epsilon\|_{L^\infty(X)}$ and $n^\epsilon$, parameters $C_1, C_2 > 0$ and $0 < \Delta < 1$
    \State $u(x) \gets \|k_n^\epsilon\|_{L^\infty(X)}$ 
    \While{$\|u\|_{L^1_\rho(X)} > \tfrac{C_2}{C_1} n^\epsilon$}
        \State Draw $\lceil C_2 n^\epsilon \log(n^\epsilon) + 1 \rceil$ samples $x_k$ i.i.d.\ from $\mu_u$~\eqref{eq:usampler}
        \State Construct matrix $A$: $(A)_{kj} \gets \overline{\phi_j(x_k)}/\sqrt{C_1 u(x_k) \log(n^\epsilon)}$ \vspace*{-5pt}
        \State \vspace*{-5pt} Update $u$~\eqref{eq:unew} via~\eqref{eq:thinqr} by computing the thin QR-decomposition of
        $ \begin{bmatrix} A \\ \epsilon I_{n \times n} \end{bmatrix}$
        \State Estimate $\|u\|_{L^1_\rho(X)}$
    \EndWhile
    \State Draw $C_1 \|u\|_{L^1_\rho(X)} \log(n^\epsilon)$ samples $x_k$ i.i.d.\ from $\mu_u$~\eqref{eq:usampler}
    \State Compute the weights $w_k = 1/(C_1 u(x_k) \log(n^\epsilon))$
    \State \Return sample points $\{x_1, x_2, \dots \}$ and weights $\{w_1, w_2, \dots\}$
  \end{algorithmic}
\end{algorithm}
}

\subsection{Discussion} \label{sec:discussion}
The computational cost of the algorithm is
\begin{align*}
    &\underbrace{\mathcal{O}(\log(\|k_n^\epsilon\|_{L^\infty(X)}))}_{\text{\# iterations}} \; \times \underbrace{\mathcal{O}(n^\epsilon \log(n^\epsilon))}_{\text{\# samples per iteration}} \times \; \underbrace{\mathcal{O}(n^2 \log(\|k_n^\epsilon\|_{L^\infty(X)}))}_{\text{cost of evaluating $u$}} \; \; \text{flops} \\ = \; &\mathcal{O}(n^3 \log(n) \log(\|k_n^\epsilon\|_{L^\infty(X)})^2) \; \; \text{flops}
\end{align*}
where we used $n^\epsilon \leq n$. Here, the number of iterations depends logarithmically on $\|k_n^\epsilon\|_{L^\infty(X)}$, since $\|u\|_{L^1_\rho(X)}$ decreases geometrically and equals $\|k_n^\epsilon\|_{L^\infty(X)}$ at initialization. We also assume that drawing one sample from $\mu_u$~\eqref{eq:usampler} requires a constant number of evaluations of the function $u$, as discussed further below.

One evaluation of $u$ in iteration $i$ corresponds to computing 
\[
    u(x) = \min\left( \|k_n^\epsilon\|_{L^\infty(X)}, \; \norm{\left(R_{(1)}^* \right)^{-1} \phi(x)}_2^2, \; \dots, \; \norm{\left(R_{(i-1)}^* \right)^{-1} \phi(x)}_2^2 \right),
\]
where each system solve requires $\mathcal{O}(n^2)$ flops since $R$ is upper-triangular. We find that the computational cost of evaluating $u$ can be reduced in practice, as discussed in Section~\ref{sec:numericalexperiments}. The total cost of computing the $R$-factors equals
\[
    \underbrace{\mathcal{O}(\log(\|k_n^\epsilon\|_{L^\infty(X)}))}_{\text{\# iterations}} \; \times \underbrace{\mathcal{O}(n^\epsilon \log(n^\epsilon) n^2)}_{\text{\parbox{2.5cm}{\centering cost of QR \\ \vspace{-1mm} decomposition}}} = \mathcal{O}(n^3 \log(n) \log(\|k_n^\epsilon\|_{L^\infty(X)})) \; \text{flops},
\]
as this only needs to be done once per iteration.

We now discuss the assumptions underlying the algorithm. A further discussion on the practical implementation of the algorithm can be found in Section~\ref{sec:numericalexperiments}.
\begin{description}
    \item[Known upper bound on $\boldsymbol{\|k_n^\epsilon\|_{L^\infty(X)}}$] 
    For polynomial spaces, the growth of the Christoffel function on irregular domains has been studied extensively~\cite{kroo2015christoffel,dolbeault2022optimal}. Even if one does not have a tight upper bound, overestimating it has little impact since the computing time depends only logarithmically on this bound.
    
    \item[Known upper bound on $\boldsymbol{n^\epsilon}$] A natural choice for an upper bound on the numerical dimension $n^\epsilon$ is $n$, since $n^\epsilon \leq n$ for all $\epsilon \geq 0$. In this case, the algorithm produces a sample set of size $\mathcal{O}(n \log(n))$, which suffices in most practical scenarios. If the set $\Phi_n$ is highly numerically redundant, however, it can be advantageous to use a numerical dimension $n^\epsilon \ll n$. 
    
    If such an estimate is unknown, one might attempt to first run Algorithm~\ref{alg} with $n$ as an upper bound, then estimate the numerical dimension using the resulting discrete Gram matrix $G_m$ and, if the estimate is significantly smaller than $n$, re-run the algorithm using this refined estimate. Unfortunately, this approach is unreliable. To see why, note that the discrete Gram $G_m$ is guaranteed to satisfy~\eqref{eq:matrixineq} with high probability. In the specific case where $G_m + \epsilon^2 I = (1-\Delta) (G+\epsilon^2 I)$, straightforward calculations show that
    \[
        \sum_{i=1}^n \frac{\lambda_i(G_m)}{\lambda_i(G_m) + \epsilon^2} = \Tr(G_m (G_m + \epsilon^2 I)^{-1}) = n^\epsilon - \frac{\Delta}{1-\Delta} \sum_{j=1}^n \frac{\epsilon^2}{\epsilon^2 + \lambda_i(G)}.
    \]
    For highly redundant bases, this results in a significant underestimation of $n^\epsilon$, causing the algorithm to fail. Reliable estimates can instead be obtained when a condition analogous to~\eqref{eq:matrixineq} holds for some $\widetilde{\epsilon} \ll \epsilon$, i.e., by using multiple regularization parameters. We do not pursue this approach further.

    \item[Sampling from $\boldsymbol{\mu_u}$ and estimating $\boldsymbol{\|u\|_{L^1_\rho(X)}}$] 
    The algorithm assumes that samples of $\mu_u$ and estimates of $\|u\|_{L^1_\rho(X)}$ can be obtained efficiently from evaluations of $u$. In practice, we find that Markov chain samplers yield satisfactory results, even though the samples are not independent. Moreover, these samplers can often run without knowledge of the normalization factor of $\mu_u$, i.e., $\|u\|_{L^1_\rho(X)}$, meaning that only a rough estimate of $\|u\|_{L^1_\rho(X)}$ is needed. This estimate is then only used to terminate Algorithm~\ref{alg}. We find that a simple Monte Carlo estimator or Matlab's built-in \texttt{integral} functions work well.
\end{description}

\section{Theoretical results} \label{sec:theory}
In this section, we establish that the proposed algorithm converges with high probability. We begin by stating the main result---a corollary of Theorem~\ref{thm:iterativescheme}, proved below, together with the algorithmic description given above.

\begin{corollary}
    Let $X \subseteq \mathbb{R}^d$ be compact, $\Phi = \{\phi_i\}_{i=1}^n \subset L^2_\rho(X)$ be a set of piecewise Lipschitz continuous functions, $\epsilon > 0$ and assume $\inf_{x \in X} k^\epsilon(x) > 0$. Run Algorithm~\ref{alg} for some $0 < \Delta < 1$, upper bounds $K \geq \|k_n^\epsilon\|_{L^\infty}$ and $N \geq n^\epsilon$, sufficiently large $C_1$ and for some $C_2$ satisfying 
    \[
        C_2 > \frac{2C_1}{1 - \Delta} \qquad \text{s.t.} \qquad C = \frac{(1-\Delta)C_2}{2C_1} > 1.
    \]
    Then, with high probability (depending on $C_1$), the algorithm terminates in at most $\max\left(0, \left\lceil \log_{C}\left(\frac{(1-\Delta)K}{2N}\right) - 1 \right\rceil\right)$ iterations and produces at most $C_2 N \log(N)$ samples and weights whose associated discrete Gram matrix satisfies~\eqref{eq:matrixineq}, thereby ensuring accurate least squares fitting as described in corollaries~\ref{corr1} and~\ref{corr2}.
\end{corollary}

The key step in proving this result is to show that, in each step of the algorithm, the updated function $u^{(new)}$ defined in~\eqref{eq:unew} remains an upper bound for $k_n^\epsilon$, while its $L^1_\rho(X)$-norm decreases relative to that of $u$. This property is quantified precisely in Theorem~\ref{thm:iterativescheme}. The algorithm presented in this paper can be viewed as a continuous analogue of the iterative leverage score sampling algorithm from~\cite[Algorithm 3]{cohen2015uniform}, while incorporating $\ell^2$-regularization as in~\cite{cohen2017input}. The analysis relies heavily on this connection.

For clarity, we use a more explicit notation throughout this section. More specifically, we use
\begin{align*}
    k^\epsilon(x; \Psi) &= \psi(x)^* (G + \epsilon^2 I)^{-1} \psi(x) \\
    k^\epsilon(x; \Psi, M) &= \psi(x)^*(M + \epsilon^2 I)^{-1} \psi(x) \\
    n^\epsilon(\Psi) &= \int_X k^\epsilon(x; \Psi) d\rho = \sum_{i=1}^n \frac{\lambda_i(G)}{\lambda_i(G) + \epsilon^2}
\end{align*}
where $\psi(x) = \begin{bmatrix} \psi_1(x) & \dots & \psi_n(x) \end{bmatrix}^\top$ and $G$ is the continuous Gram matrix associated with $\Psi = \{\psi_i\}_{i=1}^n$. 

\begin{theorem} \label{thm:iterativescheme}
Let $X \subseteq \mathbb{R}^d$ be compact, $\Phi = \{\phi_i\}_{i=1}^n \subset L^2_\rho(X)$ be a set of piecewise Lipschitz continuous functions, $\epsilon > 0$ and assume $\inf_{x \in X} k^\epsilon(x; \Phi) > 0$. Let $u$ be an integrable, piecewise Lipschitz function satisfying
\[
    u(x) \ge k^\epsilon(x; \Phi), \quad \forall x \in X.
\]
For some \(\alpha > 0\), \(\Delta > 0\), \(\gamma < 1\), let
\[
    m \ge c_\Delta \log\big(16 n^\epsilon(\Phi)/\gamma \big) \int_X u \, d\rho, 
    \qquad c_\Delta = \frac{1 + 2\Delta/3}{\Delta^2/2},
\]
for small enough $\alpha$ such that $\lceil \alpha m + 1 \rceil < m$. Draw i.i.d.\ samples $\{x_k\}_{k=1}^{\lceil \alpha m + 1\rceil}$ from $\mu_u$~\eqref{eq:usampler} and define the $\lceil \alpha m + 1\rceil \times n$ matrix $A$ via
\[
(A)_{kj} = \sqrt{\frac{w_u(x_k)}{m}} \overline{\phi_j(x_k)}.
\]
Then, with probability at least $1-2\gamma$, the updated function $u^{(new)}$~\eqref{eq:unew} satisfies
\begin{align}
    u^{(new)}(x) &\ge k^\epsilon(x; \Phi), \quad \forall x \in X, \label{thm:iterativescheme1} \\
    \int_X u^{(new)} \, d\rho &\le \left(\frac{2}{1-\Delta}\right) \frac{n^\epsilon(\Phi)}{\alpha}. \label{thm:iterativescheme2}
\end{align}
\end{theorem}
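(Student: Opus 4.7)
I would treat the two claims separately, each as a high-probability event with probability $\geq 1-\gamma$, and conclude by a union bound yielding $\geq 1-2\gamma$ overall. The key observation is that, since the $x_k$ are i.i.d., I may realize $A$ as the first $\lceil \alpha m + 1 \rceil$ rows of a hypothetical $m \times n$ matrix $A_m$ whose remaining rows come from further (counterfactual) i.i.d.\ draws from $\mu_u$.

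\emph{Pointwise bound \eqref{thm:iterativescheme1}.} Lemma~\ref{lm:matrixineq} applied to $A_m$ gives, with probability $\geq 1-\gamma$, $A_m^*A_m + \epsilon^2 I \preceq (1+\Delta)(G + \epsilon^2 I)$. Row deletion only removes positive semidefinite contributions, so $A^*A \preceq A_m^*A_m$, and the same upper bound passes to $A^*A + \epsilon^2 I$. Inverting (operator monotone on the PSD cone) and sandwiching by $\phi(x)$ yields $(1+\Delta)\phi(x)^*(A^*A+\epsilon^2 I)^{-1}\phi(x) \geq k^\epsilon(x;\Phi)$ for every $x$. Since $u \geq k^\epsilon(\cdot;\Phi)$ by hypothesis, the minimum in the definition of $u^{(new)}$ still dominates $k^\epsilon$.

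\emph{$L^1_\rho$-bound \eqref{thm:iterativescheme2}.} Using $u^{(new)}(x) \leq (1+\Delta)\phi(x)^*(A^*A+\epsilon^2 I)^{-1}\phi(x)$ and pulling the trace through the integral via $\int \phi\phi^*\, d\rho = G$,
\[
    \int_X u^{(new)}\, d\rho \;\leq\; (1+\Delta)\,\mathrm{Tr}\!\left((A^*A + \epsilon^2 I)^{-1}G\right).
\]
The second event I would establish (also with probability $\geq 1-\gamma$) is the complementary spectral lower bound
\[
    A^*A + \epsilon^2 I \;\succeq\; (1-\Delta)\,\alpha'\,(G+\epsilon^2 I), \qquad \alpha' := \lceil \alpha m + 1 \rceil/m \geq \alpha.
\]
Given this, inverting and tracing against $G$ gives $\mathrm{Tr}((A^*A+\epsilon^2 I)^{-1}G) \leq n^\epsilon(\Phi)/((1-\Delta)\alpha')$. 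Combining with $1+\Delta \leq 2$ and $\alpha' \geq \alpha$ collapses this into $\int u^{(new)}\, d\rho \leq 2\, n^\epsilon(\Phi)/((1-\Delta)\alpha)$, and the union bound supplies the overall probability $\geq 1-2\gamma$.

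\emph{Main obstacle.} The delicate step is the lower spectral bound: a multiplicative concentration of $A^*A + \epsilon^2 I$ about its mean $\alpha' G + \epsilon^2 I$ at scale $1-\Delta$, using only $m' \approx \alpha m$ samples (roughly $\alpha$ times fewer than what Lemma~\ref{lm:matrixineq} needs in its two-sided form). My plan is to apply a matrix Chernoff-type lower-tail inequality to the normalized summands $(G+\epsilon^2 I)^{-1/2}\bigl((w_u(x_k)/m)\phi(x_k)\phi(x_k)^* + (\epsilon^2/m')I\bigr)(G+\epsilon^2 I)^{-1/2}$, whose expectations sum to a PSD matrix with smallest eigenvalue $\geq \alpha'$. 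The coherence bound $w_u k^\epsilon \leq \|u\|_{L^1_\rho(X)}$ together with the chosen $m$ is meant to control the Chernoff exponent, while the $\epsilon^2$-ridge is essential to anchor the smallest eigenvalue of the expectation away from zero --- closely mirroring the discrete ridge-leverage-score analysis of~\cite{cohen2015uniform,cohen2017input}.
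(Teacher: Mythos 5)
Your treatment of \eqref{thm:iterativescheme1} is correct and matches the paper: embed $A$ as a submatrix of a counterfactual $A_m$, apply Lemma~\ref{lm:matrixineq} to $A_m$, and use $A^*A \preceq A_m^*A_m$. The gap is in \eqref{thm:iterativescheme2}, and it is the step you yourself flag as the ``main obstacle'': the lower spectral bound $A^*A + \epsilon^2 I \succeq (1-\Delta)\alpha'(G+\epsilon^2 I)$ cannot be established from only $\lceil \alpha m + 1\rceil$ samples in the regime where the theorem is actually used. Run the matrix Chernoff lower tail on your normalized summands: their norms are bounded only by $R \approx \|w_u k_n^\epsilon\|_{L^\infty}/m \leq \|u\|_{L^1_\rho(X)}/m = 1/(c_\Delta\log(16n^\epsilon/\gamma))$, while the minimum eigenvalue of the expected sum is $\mu_{\min} \approx \alpha'$. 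The resulting failure probability is of order $n\exp(-\tfrac{\Delta^2}{2}\mu_{\min}/R) \approx n\,(16n^\epsilon/\gamma)^{-\alpha'}$, which is useless once $\alpha \ll 1$ --- and $\alpha \ll 1$ is exactly the interesting case ($\alpha \propto n^\epsilon/\|u\|_{L^1_\rho(X)}$ in the algorithm). A concrete counterexample: Legendre polynomials on $[-1,1]$ with $u \equiv \|k_n\|_{L^\infty} \approx n^2$, so $\mu_u = \rho$ and $\lceil\alpha m+1\rceil \approx n\log n$ uniform samples; your claimed bound would say $n\log n$ uniform samples give two-sided spectral equivalence of $G_m$ with $G$, which is false (one needs $\approx n^2\log n$ uniform samples --- this is precisely why Christoffel sampling exists). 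If your lower bound were true, the entire iterative refinement would be unnecessary, since $\mathcal{O}(n^\epsilon\log n^\epsilon)$ samples from the crude $\mu_u$ would already suffice.

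The paper circumvents this with a reweighting device (Lemma~\ref{lm:weighting}, a continuous analogue of the ``whack-a-mole'' construction of Cohen et al.): there exists $v$ with $0\le v\le 1$ such that $k^\epsilon(\cdot\,; v\Phi) \le \alpha u + \delta_1$ and $\int_X \alpha u\,1_{\{v\neq 1\}}\,d\rho \le n^\epsilon(\Phi)+\delta_2$. The lower spectral bound is then proved only for the reweighted matrix $V^*V \preceq A^*A$ against the Gram matrix $G^v$ of $v\Phi$: because the coherence of $v\Phi$ under $\mu_u$ is smaller by the factor $\alpha$, the $\lceil\alpha m+1\rceil$ samples \emph{are} enough there. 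The integral of $u^{(new)}$ is split into $\{v\neq 1\}$ (bounded by $(n^\epsilon(\Phi)+\delta_2)/\alpha$ using $u^{(new)}\le u$) and $\{v=1\}$ (bounded by $\tfrac{1+\Delta}{1-\Delta}n^\epsilon(v\Phi)/\alpha$ via the reweighted concentration), giving the stated $\tfrac{2}{1-\Delta}n^\epsilon(\Phi)/\alpha$. Without this reweighting step, your argument does not go through.
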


\begin{proof}
    \noindent \underline{1. Establishing~\eqref{thm:iterativescheme1}} \smallskip \\
   The matrix $A$ can be viewed as a submatrix of an $m \times n$ matrix $A_m$, equivalently defined by
    \[
        (A_m)_{ij} = \sqrt{\frac{w_u(x_k)}{m}} \overline{\phi_j(x_k)}.
    \]
    Due to Lemma~\ref{lm:matrixineq}, we have $\frac{1}{1+\Delta}(A_m^*A_m + \epsilon^2 I) \preceq G + \epsilon^2 I$ with probability at least $1-\gamma$. Furthermore, 
    \begin{equation*}
        A_m^*A_m - A^*A = \sum_{k=\lceil \alpha m + 1\rceil + 1}^{m} \frac{w(x_k)}{m} \phi(x_k) \phi(x_k)^* \succeq 0 \qquad \Rightarrow \qquad A^*A \preceq A_m^*A_m
    \end{equation*}
    and, hence, $\frac{1}{1+\Delta}(A^*A + \epsilon^2 I) \preceq G + \epsilon^2 I$ with probability $1-\gamma$ as well. It follows that 
    \[ 
        (1+\Delta) k^{\epsilon} \left(x; \Phi, A^*A \right) \geq k^\epsilon\left(x; \Phi, G \right) = k^\epsilon(x; \Phi)
    \]
    and, using the definition of $u^{(new)}$ and the fact that $u \geq k^\epsilon(x; \Phi)$,
    \[
        u^{(new)}(x) \geq k^\epsilon(x; \Phi) \qquad \forall x \in X
    \]
    as required. \medskip

    \noindent \underline{2. Establishing~\eqref{thm:iterativescheme2}} \smallskip \\
    It follows from Lemma~\ref{lm:weighting}, presented below, that for every $\delta_1, \delta_2 > 0$ there exists a function $v$ such that 
    \[
        \alpha u(x) + \delta_1 \geq k^\epsilon(x; v\Phi), \quad \forall x \in X \qquad \text{and} \qquad \int_X \alpha u \; 1_{\{v(x) \neq 1\}} \; d\rho \leq n^\epsilon(\Phi) + \delta_2. 
    \]
    Using this $v$, we can derive that
    \begin{align}
        \int_X u^{(new)} \; d\rho 
        &= \int_X u^{(new)} \; 1_{\{v(x) \neq 1\}} \; d\rho + \int_X  u^{(new)} \; 1_{\{v(x) = 1\}} \; d\rho \nonumber \\
        &\leq \int_X u \; 1_{\{v(x) \neq 1\}} \; d\rho + \int_X (1+\Delta) k^\epsilon\left(x; \Phi, A^*A\right) \; 1_{\{v(x) = 1\}} \; d\rho \nonumber \\
        &\leq \frac{n^\epsilon(\Phi) + \delta_2}{\alpha} + \int_X (1+\Delta) k^\epsilon\left(x; \Phi, A^*A\right) \; 1_{\{v(x) = 1\}} \; d\rho \nonumber \\
        &= \frac{n^\epsilon(\Phi) + \delta_2}{\alpha} + \int_X (1+\Delta) k^\epsilon\left(x; v\Phi, A^*A\right) \; 1_{\{v(x) = 1\}} \; d\rho, \label{thm:iterativescheme:boundint}
    \end{align}
    where $v\Phi = \{v\phi_i\}_{i=1}^n$. 
    
    We proceed by bounding $k^\epsilon\left(x; v\Phi, A^*A\right)$. To this end, note that $0 \leq v(x) \leq 1$ implies $V^*V \preceq A^*A$, where 
    \[
        (V)_{kj} = \sqrt{\frac{w_u(x_k)}{m}} v(x_k) \overline{\phi_j(x_k)} = v(x_k) (A)_{kj}.
    \]
    Furthermore, since $\alpha u(x) \geq k^\epsilon(x; v\Phi) + \delta_1$, we have 
    \begin{equation} \label{thm:iterativescheme:eq1}
         (1-\Delta) (G^v + \epsilon^2 I) \preceq \frac{m}{\lceil \alpha m + 1\rceil} V^*V + \epsilon^2 I \qquad \text{ where } (G^v)_{ij} = \langle v\phi_i, v\phi_j \rangle_{L^2_\rho(X)}
    \end{equation}
    with probability at least $1-\gamma$ using a similar reasoning as in Lemma~\ref{lm:matrixineq} for the set $v\Phi$. More specifically, note that the samples used to construct $V$ are drawn i.i.d.\ from $\mu_u$. In order for~\eqref{thm:iterativescheme:eq1} to hold, we need that the number of samples $\lceil \alpha m + 1 \rceil$ satisfies
    \begin{align}
        \begin{split} \label{thm:iterativescheme:eq2}
        \lceil \alpha m + 1 \rceil &\geq c_\Delta \log(16n^\epsilon(v\Phi)/\gamma) \esssup_{x \sim \rho} \left( \frac{k^\epsilon(x;v\Phi) }{\alpha u(x)} \int_X \alpha u \; d\rho\right) \\ 
        &= \alpha c_\Delta \log(16n^\epsilon(v\Phi)/\gamma) \int_X u d\rho \esssup_{x \sim \rho} \left( \frac{k^\epsilon(x;v\Phi) }{\alpha u(x)} \right).
        \end{split}
    \end{align}
    Due to Lemma~SM2.1, it suffices that 
    \[
        \lceil \alpha m + 1 \rceil \geq \alpha c_\Delta \log(16n^\epsilon(\Phi)/\gamma) \int_X u d\rho \esssup_{x \sim \rho} \left( \frac{k^\epsilon(x;v\Phi) }{\alpha u(x)} \right).
    \]
    By construction we have
    \[ 
        m \geq c_\Delta \log(16n^\epsilon(\Phi)/\gamma) \int_X u d\rho,
    \] 
    and, furthermore,
    \begin{align*}
        \esssup_{x \sim \rho} \left( \frac{k^\epsilon(x;v\Phi) }{\alpha u(x)} \right) &\leq \esssup_{x \sim \rho} \left( \frac{k^\epsilon(x;v\Phi)}{\alpha u(x)+\delta_1 } \right) \esssup_{x \sim \rho} \left( \frac{\alpha u(x) + \delta_1 }{\alpha u(x) }\right) \\
        &\leq 1 + \frac{\delta_1 }{\essinf_{x \sim \rho} \alpha u(x) }.
    \end{align*}
    If we consider some $\delta_1 > 0$ satisfying
    \[ 
        \delta_1 \leq \frac{\essinf_{x \sim \rho} \alpha u(x)}{\alpha c_\Delta \log(16n^\epsilon(\Phi)/\gamma) \int_X u d\rho} = \frac{\essinf_{x \sim \rho} u(x)}{c_\Delta \log(16n^\epsilon(\Phi)/\gamma) \int_X u d\rho},
    \]
    it follows that~\eqref{thm:iterativescheme:eq2} and, hence,~\eqref{thm:iterativescheme:eq1} is satisfied. Note that this is possible since $\essinf_{x \sim \rho} u(x) > 0$ due to $\inf_{x \in X} k^\epsilon(x; \Phi) > 0$. In conclusion, we obtain the bound
    \begin{align*}
        k^\epsilon \left(x; v\Phi, A^*A \right) 
        &\leq k^\epsilon \left(x; v\Phi, V^*V \right) \qquad &(V^*V \preceq A^*A) \\
        &\leq \frac{m}{\lceil \alpha m + 1\rceil} k^\epsilon \left(x; v\Phi, \frac{m}{\lceil \alpha m + 1\rceil} V^*V \right)  \qquad &\text{(Lemma~SM2.2)}\\
        &\leq \frac{m}{\lceil \alpha m + 1\rceil} \left( \frac{1}{1-\Delta} \right)  k^\epsilon(x; v\Phi, G^v) \qquad &\text{\eqref{thm:iterativescheme:eq1}} \\
        &\leq \frac{1}{\alpha} \left( \frac{1}{1-\Delta} \right) k^\epsilon(x;v\Phi).
    \end{align*}
    
    Using this bound in~\eqref{thm:iterativescheme:boundint}, we arrive at
    \begin{align*}
        \int_X u^{(new)} \; d\rho 
        &\leq \frac{n^\epsilon(\Phi) + \delta_2}{\alpha} + \int_X \frac{1}{\alpha} \left( \frac{1+\Delta}{1-\Delta} \right) k^\epsilon(x; v \Phi) \; 1_{\{v(x) = 1\}} \; d\rho \\
        &\leq \frac{n^\epsilon(\Phi) + \delta_2}{\alpha} +\frac{1}{\alpha} \left( \frac{1+\Delta}{1-\Delta} \right) n^\epsilon(v\Phi) \\
        &\leq \left( 1 + \left( \frac{1+\Delta}{1-\Delta} \right) \right) \frac{n^\epsilon(\Phi)}{\alpha} + \frac{\delta_2}{\alpha}\\
        &= \left(\frac{2}{1-\Delta}\right) \frac{n^\epsilon(\Phi)}{\alpha} + \frac{\delta_2}{\alpha},
    \end{align*}
    where in the third step we use Lemma~SM1.1. Since this bound holds for every $\delta_2 > 0$, one has 
    \begin{align*}
        \int_X u^{(new)} \; d\rho 
        &\leq \inf_{\delta_2 > 0} \left(\frac{2}{1-\Delta}\right) \frac{n^\epsilon(\Phi)}{\alpha} + \frac{\delta_2}{\alpha} = \left(\frac{2}{1-\Delta}\right) \frac{n^\epsilon(\Phi)}{\alpha}.
    \end{align*}

    \noindent \underline{3. Establishing the probability} \smallskip \\
    The probability that the events $E_1 = \{\text{\eqref{thm:iterativescheme1} holds}\}$ and $E_2 = \{\text{\eqref{thm:iterativescheme2} holds}\}$ occur simultaneously is bounded by
    \[
    \mathbb{P}(E_1 \cap E_2) = 1 - \mathbb{P}(E_1^c \cup E_2^c) \geq 1 - \mathbb{P}(E_1^c) - \mathbb{P}(E_2^c) \geq 1 - 2\gamma.
    \]
\end{proof}

The proof of Theorem~\ref{thm:iterativescheme} relies heavily on the following lemma, which asserts the existence of a weight function $v$ (with $0 \leq v(x) \leq 1, \forall x \in X$) such that the numerical inverse Christoffel function $k_n^\epsilon$ associated with the weighted set $v\Phi$ lies below a prescribed function $u$ (up to an arbitrarily small constant $\delta_1 > 0$). Moreover, the function $v$ does not distort the functions in $\Phi$ too much, in the sense of~\eqref{lm:weighting:eq2}. Note that without condition~\eqref{lm:weighting:eq2}, condition~\eqref{lm:weighting:eq1} would be trivial to satisfy---for instance, by choosing $v = 0$.

This lemma is closely related to discrete analogues, namely~\cite[Theorem 2]{cohen2015uniform} (for leverage scores) and~\cite[Lemma 6.2]{cohen2017input} (for ridge leverage scores with variable regularization parameter). On top of this connection, our proof proceeds by first discretizing the problem and then applying the proof techniques from~\cite[Theorem 2]{cohen2015uniform} and~\cite[Lemma 6.2]{cohen2017input}. Specifically, we require a discrete result on ridge leverage scores with a fixed regularization parameter, which is provided in the supplementary material.

\begin{lemma} \label{lm:weighting}
    Let $X \subset \mathbb{R}^n$ be compact, $\{\phi_i\}_{i=1}^n \subset L^2_\rho(X)$ be a set of piecewise Lipschitz continuous functions, $u: X \to \mathbb{R}$ be a strictly positive and piecewise Lipschitz continuous function and $\epsilon > 0$. Then, for every $\delta_1,\delta_2 > 0$ there exists a function $v: X \to \mathbb{R}$ satisfying 
    \[
        0 \leq v(x) \leq 1 \qquad \forall x \in X
    \]
    such that 
    \begin{equation} \label{lm:weighting:eq1}
        k^\epsilon(x; v \Phi) \leq u(x) + \delta_1 \qquad \forall x \in X
    \end{equation}
    where $v\Phi = \{v \phi_i\}_{i=1}^n$, and 
    \begin{equation} \label{lm:weighting:eq2}
        \int_{X} u \; 1_{\{v(x) \neq 1\}} \; d\rho \leq n^\epsilon(\Phi) + \delta_2.
    \end{equation}
\end{lemma}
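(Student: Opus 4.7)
\textbf{Proof proposal for Lemma~\ref{lm:weighting}.} The plan is to reduce the problem to a discrete one where the ridge leverage score result from Appendix~\ref{app1} applies, obtain multipliers $\tilde v_j \in [0,1]$ satisfying the analogous discrete conditions, and then lift them back to a continuous weight function $v$ using the piecewise Lipschitz regularity of the data. The assumption $\epsilon > 0$ enters decisively, because all matrices to be inverted are bounded below by $\epsilon^2 I$, which keeps every error estimate uniform as the discretization is refined.

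First I would choose a partition $\{X_j\}_{j=1}^N$ of the compact domain $X$ that subordinates the pieces of the piecewise Lipschitz structure of $u$ and the $\phi_i$, with representatives $y_j \in X_j$ and weights $\omega_j = \rho(X_j)$. Let
\[
    \tilde G = \sum_{j=1}^N \omega_j \phi(y_j)\phi(y_j)^*, \qquad \tilde u_j = u(y_j).
\]
By the Lipschitz hypotheses and compactness of $X$, $\tilde G \to G$ and $\sum_j \omega_j \tilde u_j \phi(y_j)\phi(y_j)^*$ converges to its continuous counterpart uniformly in the mesh size $h = \max_j \operatorname{diam}(X_j)$, with explicit Lipschitz-controlled rates. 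In particular $\tilde G + \epsilon^2 I$ is invertible with norm of inverse at most $\epsilon^{-2}$, and $\|(\tilde G + \epsilon^2 I)^{-1} - (G + \epsilon^2 I)^{-1}\|_2 \to 0$ as $h \to 0$.

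Second I would apply the discrete ridge leverage score result from Appendix~\ref{app1} to the $N \times n$ matrix with rows $\sqrt{\omega_j}\phi(y_j)^*$, target bounds $\omega_j \tilde u_j$, and regularization $\epsilon$. This produces multipliers $\tilde v_j \in [0,1]$ such that the $j$-th ridge leverage score of the reweighted matrix is at most $\omega_j \tilde u_j$ plus arbitrarily small slack, while
\[
    \sum_{j : \tilde v_j \neq 1} \omega_j \tilde u_j \ \leq\ \operatorname{Tr}\bigl(\tilde G (\tilde G + \epsilon^2 I)^{-1}\bigr) + \tfrac{\delta_2}{2},
\]
the right-hand side being the discrete numerical dimension plus slack. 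I would then define $v(x) = \tilde v_j$ for $x \in X_j$ (piecewise constant), which manifestly satisfies $0 \leq v(x) \leq 1$.

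Third, I would verify the two conditions in the continuous setting by absorbing the discretization errors into the slacks. For~\eqref{lm:weighting:eq2}, the discrete sum differs from $\int_X u \cdot 1_{\{v \neq 1\}} d\rho$ by a Lipschitz-$u$ error of order $h$, and $\operatorname{Tr}(\tilde G(\tilde G + \epsilon^2 I)^{-1})$ differs from $n^\epsilon(\Phi)$ by $O(h/\epsilon^2)$; choosing $h$ small enough makes the total error at most $\delta_2/2$, yielding~\eqref{lm:weighting:eq2}. For~\eqref{lm:weighting:eq1}, the continuous weighted Gram $G^v = \int v^2 \phi\phi^* d\rho$ agrees with $\sum_j \omega_j \tilde v_j^2 \phi(y_j)\phi(y_j)^*$ up to an $O(h)$ Lipschitz error, and at each $y_j$ the discrete bound gives $k^\epsilon(y_j; v\Phi) \leq u(y_j) + \delta_1/2$ after adjusting by $O(h/\epsilon^2)$. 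Extending from $y_j$ to an arbitrary $x \in X_j$ uses Lipschitz continuity of $\phi_i$ together with $\|(G^v + \epsilon^2 I)^{-1}\|_2 \leq \epsilon^{-2}$ to bound $|k^\epsilon(x; v\Phi) - k^\epsilon(y_j; v\Phi)|$; similarly $|u(x) - u(y_j)|$ is controlled by the Lipschitz constant of $u$. Choosing $h$ small enough makes all these errors sum to at most $\delta_1$, giving~\eqref{lm:weighting:eq1}.

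The main obstacle is step three, and specifically the pointwise (not just almost-everywhere) bound in~\eqref{lm:weighting:eq1}: the discrete result only constrains $k^\epsilon$ at the grid nodes $y_j$, so I must propagate the bound uniformly to all $x \in X$ using the Lipschitz constants of the $\phi_i$ and the operator-norm bound $\epsilon^{-2}$ on $(G^v + \epsilon^2 I)^{-1}$. This is where the strict positivity of $\epsilon$ is indispensable: without it the inverse matrices could blow up along sequences with $v \to 0$ on large sets, destroying the ability to make the Lipschitz errors uniformly small. A secondary subtlety is ensuring the discrete result of Appendix~\ref{app1} is applied with a sufficiently strong form to permit the $\delta_2/2$ slack to absorb both the Riemann-sum error in $n^\epsilon$ and the Lipschitz error in $\int u\, 1_{\{v \neq 1\}} d\rho$ simultaneously, which amounts to first fixing a mesh size $h$ that controls the Lipschitz errors given $\delta_1, \delta_2$, and then invoking the discrete lemma with slack parameters rescaled accordingly.
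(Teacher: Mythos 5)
Your proposal is correct and follows essentially the same route as the paper: discretize on a partition subordinate to the Lipschitz pieces, invoke the discrete ridge leverage score lemma of Appendix~\ref{app1} to obtain cellwise multipliers, define $v$ piecewise constant, and absorb all discretization errors (controlled uniformly by the $\epsilon^{-2}$ bound on the regularized inverses and the Lipschitz constants) into $\delta_1$ and $\delta_2$. The only cosmetic difference is that the paper takes the discrete targets to be $\int_{X_k} u\,d\rho$ rather than $\rho(X_k)\,u(y_k)$, which makes condition~\eqref{lm:weighting:eq2} exact at the discrete level instead of incurring the extra $O(h)$ error you correctly account for.
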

\begin{proof}
     We start by partitioning $X$ in $\ell$ $\rho$-measurable partitions $\{X_k\}_{k=1}^\ell$, identifying $\ell$ discretization points $\{x_k\}_{k=1}^\ell$ that satisfy $x_k \in X_k$, and constructing the matrix $B^{(\ell)} \in \mathbb{R}^{\ell \times n}$ via
    \[
        (B^{(\ell)})_{kj} = \sqrt{\rho(X_k)} \kern3pt \overline{\phi_j(x_k)}.
    \]
    We assume that the partitions are chosen in such a way that both the functions $\{\phi_i\}_{i=1}^n$ and the function $u$ are continuous in all partitions $X_k$. Furthermore, we define
    \[
        h^{(\ell)} = \max_{1 \leq k \leq \ell}\max_{x \in X_k} \left\vert x - x_k \right\vert,
    \]
    and
    \[
        u_k^{(\ell)} = \int_{X_k} u \; d\rho, \qquad 1 \leq k \leq \ell.
    \]
    Since $u$ is strictly positive, $u_k^{(\ell)} > 0$ for all $i$. From Lemma~SM1.1, it follows that there exists a diagonal matrix $V^{(\ell)} \in \mathbb{R}^{\ell \times \ell}$ with $0 \preceq V^{(\ell)} \preceq I$ such that 
    \begin{equation} \label{lm:weighting:proof1}
        \tau_k^\epsilon(V^{(\ell)}B^{(\ell)}) \leq u_k^{(\ell)}, \qquad 1 \leq k \leq \ell
    \end{equation}
    and
    \begin{equation} \label{lm:weighting:proof2}
        \sum_{V^{(\ell)}_{kk} \neq 1} u_k^{(\ell)} \leq n^\epsilon(B^{(\ell)}).
    \end{equation}
    Using this matrix $V^{(\ell)}$ we can define a piecewise continuous function $v^{(\ell)}$ by 
    \[
        v^{(\ell)}(x) = \sum_{k=1}^\ell V^{(\ell)}_{kk} 1_{X_k},
    \]
    where $1_{X_k}$ is the indicator function of $X_k$. \medskip

    \noindent \underline{1. Establishing~\eqref{lm:weighting:eq2}} \smallskip \\
    From~\eqref{lm:weighting:proof2}, it follows immediately that
    \[  
        \int_{X} u \; 1_{\{v^{(\ell)}(x) \neq 1\}} \; d\rho = \sum_{k: V^{(\ell)}_{kk} \neq 1} \int_{X_k} u \; d\rho = \sum_{k: V^{(\ell)}_{kk} \neq 1} u_k^{(\ell)} \leq n^\epsilon(B^{(\ell)})
    \]
    In order to relate $n^\epsilon(B^{(\ell)})$ to $n^\epsilon(\Phi)$, consider 
    \begin{align}
        \begin{split} \label{lm:weighting:grams}
            \left\vert ((B^{(\ell)})^* B^{(\ell)})_{ij} - (G)_{ij} \right\vert &= \left\vert \sum_{k=1}^m \rho(X_k) \phi_i(x_k) \overline{\phi_j(x_k)} - \int_{X} \phi_i \overline{\phi_j} \; d\rho \right\vert \\
            &\leq \sum_{k=1}^m \left\vert \rho(X_k)  \phi_i(x_k) \overline{\phi_j(x_k)} - \int_{X_k} \phi_i \overline{\phi_j} \; d\rho \right\vert \\
            &= \sum_{k=1}^m \left\vert \int_{X_k} \phi_i(x_k) \overline{\phi_j(x_k)} - \phi_i \overline{\phi_j} \; d\rho \right\vert \\
            &\leq C \rho(X) h^{(\ell)},
        \end{split}
    \end{align}
    for some $C > 0$, where in the last step we use that the functions $\phi_i$ are Lipschitz continuous in every partition and bounded. Using~\cite[\S 21-1]{hogben2006handbook}, it follows that there exists a permutation $\tau$ over $\{1,2,\dots,n\}$ such that 
    \[
        \lvert \lambda_i(G) - \lambda_{\tau(i)}((B^{(\ell)})^* B^{(\ell)}) \rvert \leq \|G - (B^{(\ell)})^* B^{(\ell)}\|_F \leq C n \rho(X) h^{(\ell)}, \qquad 1 \leq i \leq n
    \]
    and, hence, 
    \[
        n^\epsilon(B^{(\ell)}) \leq n^\epsilon(\Phi) + C n^2 \rho(X) h^{(\ell)} \epsilon^{-2}
    \]
    since $\lambda / (\lambda + \epsilon^2)$ is monotonically increasing for nonnegative $\lambda$ and 
    \[  
        \frac{\lambda + \delta}{\lambda + \delta + \epsilon^2} \leq \frac{\lambda}{\lambda + \epsilon^2} + \epsilon^{-2} \delta
    \]
    for $\delta \geq 0$. Because it is possible to construct partitions such that $h^{(\ell)} \to 0$ as $\ell \to \infty$, we obtain that for every $\delta_2 > 0$ there exists a sufficiently large $\ell$ such that 
    \[
        \int_{X} u \; 1_{\{v(x) \neq 1\}} \; d\rho \leq n^\epsilon(\Phi) + \delta_2
    \]
    for $v = v^{(\ell)}$, as required. \medskip
    
    \noindent \underline{2. Establishing~\eqref{lm:weighting:eq1}} \smallskip \\
    We divide~\eqref{lm:weighting:proof1} by $\rho(X_k)$ and consider
    \begin{align} 
        \begin{split} \label{lm:weighting:proof3} 
            \tau^\epsilon_i(V^{(\ell)}B^{(\ell)})/\rho(X_k) &\leq u_k^{(\ell)}/\rho(X_k) \\ 
            &\Updownarrow \\
            k^\epsilon(x; v^{(\ell)}\Phi) + \underbrace{\tau^\epsilon_k(V^{(\ell)}B^{(\ell)})/\rho(X_k) - k^\epsilon(x; v^{(\ell)}\Phi)}_{\text{error 1}} &\leq u(x) + \underbrace{u_k^{(\ell)}/\rho(X_k) - u(x)}_{\text{error 2}}
        \end{split}
    \end{align}
    for $x \in X_k$. We will show that both errors in the inequality above go to zero as $h^{(\ell)}$ goes to zero, for every $k$. Because it is possible to construct partitions such that $h^{(\ell)} \to 0$ as $\ell \to \infty$, we obtain that for every $\delta_1 > 0$ there exists a sufficiently large $\ell$ such that
    \[
        k^\epsilon(x; v\Phi) \leq u(x) + \delta_1, \qquad \forall x \in X
    \]
    for $v = v^{(\ell)}$, as required
    
    The first error can be bounded by splitting it as
    \begin{align} \label{lm:weighting:proof4}
    \begin{split} \left\vert \tau^\epsilon_k(V^{(\ell)}B^{(\ell)})/\rho(X_k) - k^\epsilon(x; v^{(\ell)}\Phi) \right\vert \;  \leq \; 
    &\left\vert \tau^\epsilon_k(V^{(\ell)}B^{(\ell)})/\rho(X_k) - k\left(x;v^{(\ell)}\Phi,G^{v^{(\ell)}}_\ell\right) \right\vert \\ &+ \left\vert k\left(x;v^{(\ell)}\Phi,G^{v^{(\ell)}}_\ell\right) - k^\epsilon(x; v^{(\ell)}\Phi) \right\vert 
    \end{split}
    \end{align}
    where $G^{v^{(\ell)}}_\ell = (V^{(\ell)}B^{(\ell)})^* V^{(\ell)}B^{(\ell)}$. Since $x \in X_k \Rightarrow v^{(\ell)}(x) = V^{(\ell)}_{kk}$, we can bound the first term by
    \begin{align*}
        &\left\vert \tau_k^\epsilon(V^{(\ell)}B^{(\ell)})/\rho(X_k) - k\left(x;v^{(\ell)}\Phi,G^{v^{(\ell)}}_\ell\right) \right\vert \\ &= \left\vert v^{(\ell)}(x_k)^2\phi(x_k)^*(G^{v^{(\ell)}}_\ell + \epsilon^2 I)^{-1}\phi(x_k) - v^{(\ell)}(x)^2\phi(x)^*(G^{v^{(\ell)}}_\ell + \epsilon^2 I)^{-1}\phi(x) \right\vert \\
        &= \left(V^{(\ell)}_{kk}\right)^2 \left\vert (\phi(x_k)-\phi(x))^*(G^{v^{(\ell)}}_\ell + \epsilon^2 I)^{-1}(\phi(x_k) + \phi(x)) \right\vert \\
        &\leq \|\phi(x_k) - \phi(x)\|_2 \|(G^{v^{(\ell)}}_\ell + \epsilon^2 I)^{-1}\|_2 \|\phi(x_k) + \phi(x)\|_2 \\
        &\leq C n^2 h^{(\ell)} \epsilon^{-2},
    \end{align*}
    for some $C > 0$, where in the last step we use that the functions $\phi_i$ are Lipschitz continuous in $X_k$ and bounded. This term indeed goes to zero as $h^{(\ell)} \to 0$. 
    
    In order to bound the second part of~\eqref{lm:weighting:proof4}, define $G^{v^{(\ell)}} \in \mathbb{C}^{n \times n}$ as 
    \[
        \left(G^{v^{(\ell)}}\right)_{ij} = \langle v^{(\ell)} \phi_i, v^{(\ell)} \phi_j \rangle_{L^2_\rho(X)}
    \]
    and consider 
    \begin{align*}
        \left\vert \left(G^{v^{(\ell)}}_\ell\right)_{ij} - \left(G^{v^{(\ell)}}\right)_{ij} \right\vert &= \left\vert \sum_{k=1}^\ell \rho(X_k) v^{(\ell)}(x_k)^2 \phi_i(x_k) \overline{\phi_j(x_k)} - \int_{X} (v^{(\ell)})^2 \phi_i \overline{\phi_j} \; d\rho \right\vert \\
        &\leq \sum_{k=1}^\ell \left\vert \rho(X_k) v^{(\ell)}(x_k)^2 \phi_i(x_k) \overline{\phi_j(x_k)} - \int_{X_k} (v^{(\ell)})^2 \phi_i \overline{\phi_j} \; d\rho \right\vert \\
        &= \sum_{k=1}^\ell \left(V^{(\ell)}_{kk}\right)^2 \left\vert \rho(X_k)  \phi_i(x_k) \overline{\phi_j(x_k)} - \int_{X_k} \phi_i \overline{\phi_j} \; d\rho \right\vert \\
        &\leq \sum_{k=1}^\ell \left\vert \rho(X_k) \phi_i(x_k) \overline{\phi_j(x_k)} - \int_{X_k} \phi_i \overline{\phi_j} \; d\rho \right\vert \\
        &\leq C \rho(X) h^{(\ell)},
    \end{align*}
    for some $C>0$, where the last step is equivalent to the steps in~\eqref{lm:weighting:grams}. This derivation indeed allows us to bound the second term of~\eqref{lm:weighting:proof4} for all $x \in X_k$, namely
    \begin{align*}
        &\left\vert k\left(x;v^{(\ell)}\Phi,G^{v^{(\ell)}}_\ell\right) - k^\epsilon(x; v^{(\ell)}\Phi) \right\vert \\ &= \left\vert v^{(\ell)}(x)^2\phi(x)^*(G^{v^{(\ell)}}_\ell+\epsilon^2 I)^{-1}\phi(x) - v^{(\ell)}(x)^2\phi(x)^*(G^{v^{(\ell)}}+\epsilon^2 I)^{-1}\phi(x) \right\vert \\
        &= \left(V_{kk}^{(\ell)}\right)^2 \left\vert \phi(x)^*(G^{v^{(\ell)}}_\ell+\epsilon^2 I)^{-1}\phi(x) - \phi(x)^*(G^{v^{(\ell)}}+\epsilon^2 I)^{-1}\phi(x) \right\vert \\
        &\leq \left\vert \phi(x)^*(G^{v^{(\ell)}}_\ell+\epsilon^2 I)^{-1}\phi(x) - \phi(x)^*(G^{v^{(\ell)}}+\epsilon^2 I)^{-1}\phi(x) \right\vert \\
        &= \left\vert \phi(x)^*((G^{v^{(\ell)}}_\ell+\epsilon^2 I)^{-1} ((G^{v^{(\ell)}}+\epsilon^2 I) - (G^{v^{(\ell)}}_\ell+\epsilon^2 I))(G^{v^{(\ell)}}+\epsilon^2 I)^{-1})\phi(x) \right\vert \\
        &\leq \|\phi(x)\|^2_2 \|(G^{v^{(\ell)}}_\ell+\epsilon^2 I)^{-1} \|_2  \|G^{v^{(\ell)}} - G^{v^{(\ell)}}_\ell\|_2 \|(G^{v^{(\ell)}}+\epsilon^2 I)^{-1}\|_2 \\
        &\leq C \epsilon^{-4} n^{5/2} \rho(X) h^{(\ell)},
    \end{align*}
    for some $C > 0$, where in the last step we use that the functions $\phi_i$ are bounded and $\|M\|_2 \leq \sqrt{n} \max_{1 \leq i \leq n} \sum_{j=1}^n \lvert (M)_{ij} \rvert$ for $M \in \mathbb{C}^{n \times n}$~\cite[\S 50-4]{hogben2006handbook}. This term also goes to zero as $h^{(\ell)} \to 0$ and, hence, the first error in~\eqref{lm:weighting:proof3} goes to zero as $h^{(\ell)} \to 0$. 
    
    Similarly for the second error in~\eqref{lm:weighting:proof3}, we get for all $x \in X_k$ that
    \begin{align*}
        \left\vert u_i^{(\ell)}/\rho(X_k) - u(x) \right\vert 
        &= \left\vert \int_{X_k} u(y) - u(x) \; d\rho(y) \right\vert / \rho(X_k) \\
        &\leq \sup_{y \in X_k} \lvert u(y) - u(x) \rvert \\
        &\leq Ch^{(\ell)},
    \end{align*}
    for some $C > 0$, where in the last step we use that $u$ is Lipschitz continuous in $X_k$.
\end{proof}

\section{Numerical experiments} \label{sec:numericalexperiments}
In this section we verify the performance of the refinement-based Christoffel sampling (RCS) algorithm numerically. Its implementation in \texttt{MATLAB} and \texttt{Julia} and all code needed to reproduce the numerical experiments are made publicly available~\cite{github_repo}. We use slice sampling~\cite{neal2003slice} to (approximately) draw from $\mu_u$ and a simple Monte Carlo estimate to approximate $\|u\|_{L_\rho^1(X)}$. 

Some changes were made in the practical implementation compared to Algorithm~\ref{alg}, which favor the computational complexity while the empirical performance remains satisfactory. First, we discard $\log$-terms. More specifically, in each iterations we sample $C_2 n^\epsilon$ points and construct the matrix $A$ via $(A)_{kj} = \overline{\phi_j(x_k)}/\sqrt{C_1 u(x_k)}$. We choose $C_1 = 5$ and $C_2 = 5 C_1$. At the end of the algorithm, $C_3 \|u\|_{L^1_\rho(X)}$ samples are drawn, with $C_3 = 10$ unless stated otherwise. By varying $C_1$, $C_2$ and $C_3$, one balances efficiency and reliability. Second, we only take the first and the last two iterations into account for the update of $u$, i.e., 
\begin{align*}
    u_{(i)}(x) = \min \Bigg( \|k_n^\epsilon\|_{L^\infty(X)}, \; \norm{\left(R_{(i-2)}^* \right)^{-1} \phi(x)}_2^2, \; \norm{\left(R_{(i-1)}^* \right)^{-1} \phi(x)}_2^2 \Bigg).
\end{align*}
With these changes, the computational complexity reduces to $\mathcal{O}\left(n^3 \log\left(\| k_n^\epsilon\|_{L^\infty(X)}\right) \right)$. For more details on the implementation, we refer to the GitHub repository~\cite{github_repo}. 

We consider a couple of examples, which aim to showcase the versatility, robustness and efficiency of the algorithm. As a simple benchmark, we illustrate the improvement of RCS sampling compared to uniform sampling in Figure~\ref{fig:convergence}. Moreover, we compare the performance and efficiency of the RCS algorithm to the dense grid method~\cite{dolbeault2022optimal,adcock2020near,migliorati2021multivariate} in Figure~\ref{fig:timing}. \vspace{2mm}

\begin{enumerate}[itemsep=1em, leftmargin=*, label=\textbf{\alph*.}]
    \item \textbf{Polynomials + weighted polynomials.} 
    We consider
    \begin{equation*}
        \Phi_n = \{ p_i(x) \}_{i=0}^{n_1-1} \cup \{ w(x) p_i(x) \}_{i=0}^{n_2-1},
    \end{equation*}
    where $n = n_1 + n_2$ and $p_i$ is a polynomial basis function of degree $i$. Such bases are commonly used to approximate functions with known singular or oscillatory behaviour, which is incorporated in the weight function $w(x)$, see~\cite[Example 3]{adcock2019frames} and references therein. The function $u$ computed by the RCS algorithm for $n = 40$, $X = [-1,1]$ and $w(x) = \sqrt{x+1}$ is shown in Figure~\ref{fig:christoffelfun}(a). In Figure~\ref{fig:convergence}(a), we consider the convergence of the discrete least squares approximation of
    \begin{equation*}
            f(x) = \frac{\sqrt{x+1}}{1+5x^2} + \cos(5x) \qquad \text{on } X = [-1,1]
    \end{equation*}
    in the span of $\Phi_n$ with the same weight function.

    \item \textbf{Rational approximation with preassigned poles.} The lightning scheme uses rational functions with preassigned poles to approximate functions with singularities at known locations, a setting which often occurs when solving PDEs on domains with corners~\cite{gopal2019solving}. As a simple model problem, we consider approximation on $[0,1]$ of functions with an endpoint singularity at $x = 0$ using
    \begin{equation*}
        \Phi_n = \left\{ \frac{-q_i}{x - q_i} \right\}_{i=1}^{n_1} \cup \{p_i(x)\}_{i=0}^{n_2-1} \qquad \text{with } q_i = -\exp\left(4\left(\sqrt{i} - \sqrt{n_1}\right)\right),
    \end{equation*}
    where $p_i$ is again a polynomial basis function of degree $i$, $n = n_1 + n_2$ and $n_2 = \mathcal{O}(\sqrt{n_1})$~\cite{herremans2023resolution}. Hence, we combine partial fractions with simple poles $q_i$, which cluster towards zero along the negative real axis in a tapered fashion~\cite{trefethen2021exponential}, with a polynomial basis. Figure~\ref{fig:christoffelfun}(b.1) shows the function $u$ computed by the RCS algorithm for different values of $n$. In Figure~\ref{fig:convergence}(b.1), we compare the error of discrete least squares approximation to $f(x) = \sqrt{x}$ using $n_2 = 2\sqrt{n_1}$ for RCS samples and uniformly random samples. Similarly, Figure~\ref{fig:timing} presents a comparison with the discrete grid method, showing that the RCS algorithm (using $C_3 = 15$) achieves higher accuracy with significantly less computation time.

    The lightning approximation scheme can be extended to multiple dimensions~\cite{boulle2024multivariate}. As a multivariate example, we consider the setting in~\cite[\S4.2]{boulle2024multivariate}, namely:
    \begin{equation} \label{eq:lightning2D}
        \Phi_n = \bigcup_{j=1}^{n_1} \left\{ \frac{q_j p_k(x) p_l(y)}{Q(x,y) - q_j} \right\}_{(k,l) = (0,0)}^{(n_2-1,n_2-1)}  \cup \{ p_k(x) p_l(x) \}_{(k,l) = (0,0)}^{(n_3-1,n_3-1)}
    \end{equation}
    on $X = [-2,2]^2$, where $p_i$ is again a polynomial basis function of degree $i$,
    \[
        Q(x,y) = x^3 - 2x + 1 - y^2 \qquad \text{and} \qquad q_j = \pm i \exp(-4(\sqrt{n_1} - \sqrt{j})), \; 1 \leq j \leq n_1.
    \]
    Hence, the number of basis functions equals $n = 2n_1n_2^2 + n_3^2$. This basis is tailored to approximate functions with a curve of singularities along the elliptic curve $Q(x,y) = 0$. In Figures~\ref{fig:christoffelfun}(b.2) and~\ref{fig:samples}(b.2), the output of the RCS algorithm is shown for $n_1 = 15$, $n_2 = 3$ and $n_1 = 10$ ($n = 370$). The function $u$ is peaked along the elliptic curve. In Figure~\ref{fig:convergence}(b.2), we show the convergence behaviour of discrete least squares approximation to $f(x,y) = \lvert Q(x,y) \rvert = \lvert x^3 - 2x + 1 - y^2 \rvert$.
    
    \item \textbf{Fourier extension of a curve.} Non-orthogonal bases typically appear when approximating on irregular domains. Many methods, often implicitly, construct an orthonormal basis on a surrounding regular domain, which then loses orthogonality when restricted to the irregular one~\cite{adcock2020approximating}. For a Fourier basis, this is known as Fourier extension~\cite{matthysen2018function}. We consider the extreme case of approximating a function on a curve using a 2D Fourier basis. More specifically, we use
    \begin{equation} \label{eq:fourier}
        \Phi_n = \{\exp(2\pi i (k_x x + k_y y)) \}_{(k_x, k_y) = (-n_1, -n_1)}^{(n_1, n_1)}
    \end{equation}
    to approximate functions on the edge of a triangle in the torus $[-1/2,1/2)^2$. The $n = (2n_1+1)^2$ functions are heavily redundant on the curve, meaning that the numerical dimension $n^\epsilon \ll n$. In our algorithm, we find that $N = 10n_1+50 = \mathcal{O}(\sqrt{n})$ works well as an upper bound for $n^\epsilon$. The function $u$ computed by the RCS algorithm is shown in Figure~\ref{fig:christoffelfun}(c) for $n_1 = 15$. In Figure~\ref{fig:convergence}(c), we show the error of a discrete least squares fit to $f(x,y) = \sqrt{x^2 + y^2}$. Note that Christoffel sampling does not seem to outperform uniform sampling, a phenomenon which has been analysed for 1D Fourier extensions~\cite{herremans2025sampling,adcock2023fast}. 
    
    \item \textbf{Spectral method for fractional Laplacians.} 
    In~\cite{papadopoulos2023frame}, a spectral method is developed for fractional differential equations. More specifically, an approximation $\hat{v}$ to the solution $v$—that decays as $\lvert x\rvert \to +\infty$—of an equation $\mathcal{L}[v] = f$ on \(\mathbb{R}^d\) involving the fractional Laplacian $(-\Delta)^s$, $s \in (0,1)$, is sought in the span of extended and weighted orthogonal polynomials $\{\phi_i\}_i$,
    \[
        v \approx \hat{v} = \sum_i c_i \phi_i, \qquad
        \mathcal{L}\hat{v} = \sum_i c_i\, \mathcal{L}\phi_i \approx f.
    \]
    This equation can be discretized by sampling, a procedure known as \textit{collocation}. Hence, finding effective collocation points is equivalent to identifying good sampling points for approximation in the span of $\{\mathcal{L}\phi_i\}_i$, which can be achieved using the RCS algorithm.
    To illustrate this, we consider the two-dimensional setting described in~\cite[\S7.4]{papadopoulos2023frame}, restricting the domain to the disk of radius~10 as proposed in that example. For further details, we refer to~\cite[\S7.4]{papadopoulos2023frame}. Figures~\ref{fig:christoffelfun}(d) and~\ref{fig:samples}(d) show the function $u$ and the sample points computed by the RCS algorithm, while Figure~\ref{fig:convergence}(d) presents the convergence results comparing uniform samples, RCS samples (using $C_3 = 5$) and the deterministic samples proposed in the original paper. Note that the error plot reports $\|v - \hat{v}\|_\infty$ on the disk of radius~10, which differs from $\|f - \mathcal{L}\hat{v}\|_\infty$. We find that RCS gives the same level of accuracy, while using far fewer samples than the deterministic procedure.
\end{enumerate}

\begin{figure}
    \begin{subfigure}{\linewidth}
        \centering
        \includegraphics[width=.44\linewidth]{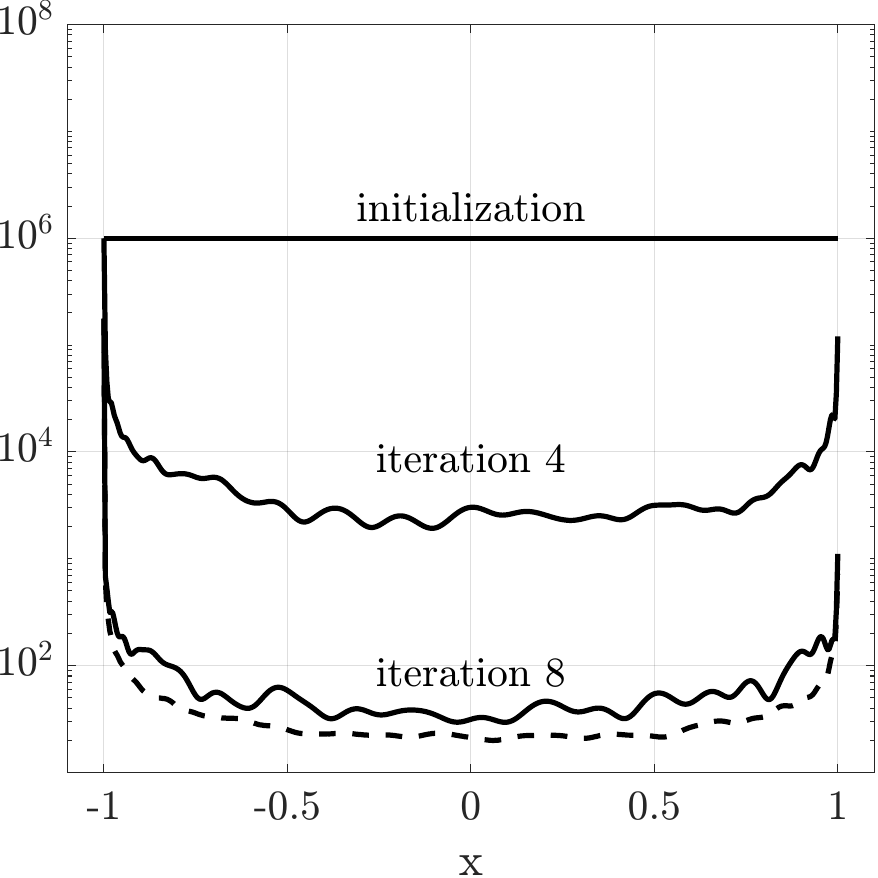}\hspace{9mm}
        \includegraphics[width=.44\linewidth]{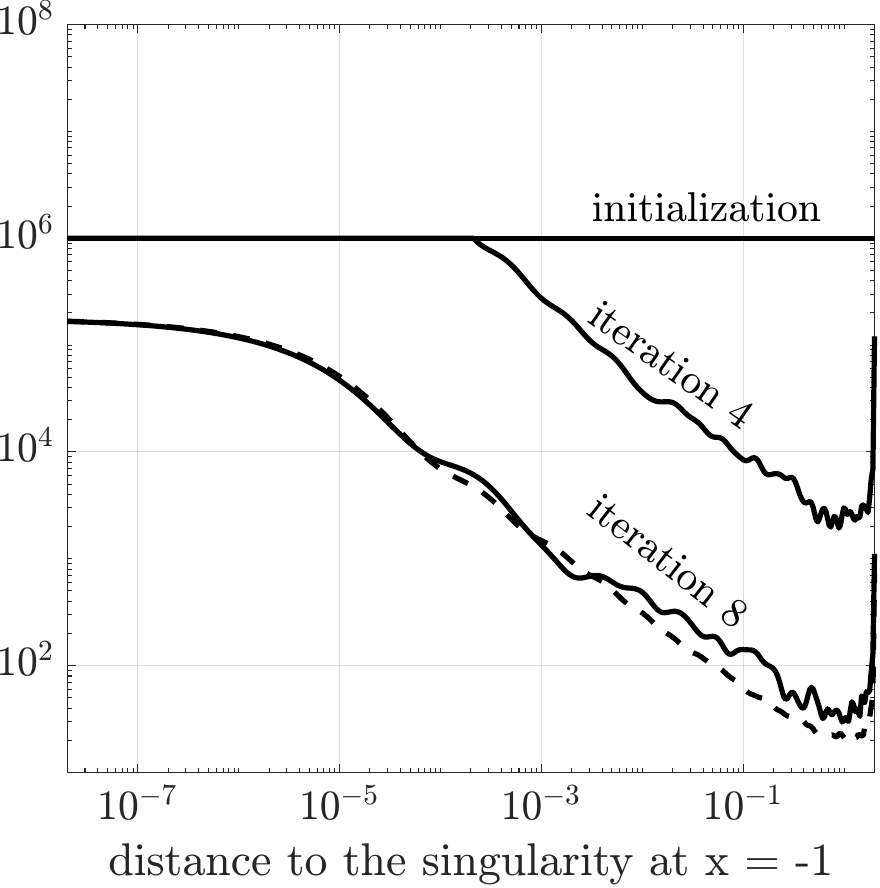}
        \caption{polynomials + weighted polynomials}
    \end{subfigure}
    \begin{subfigure}{.49\linewidth}
        \centering
        \includegraphics[width=.9\linewidth]{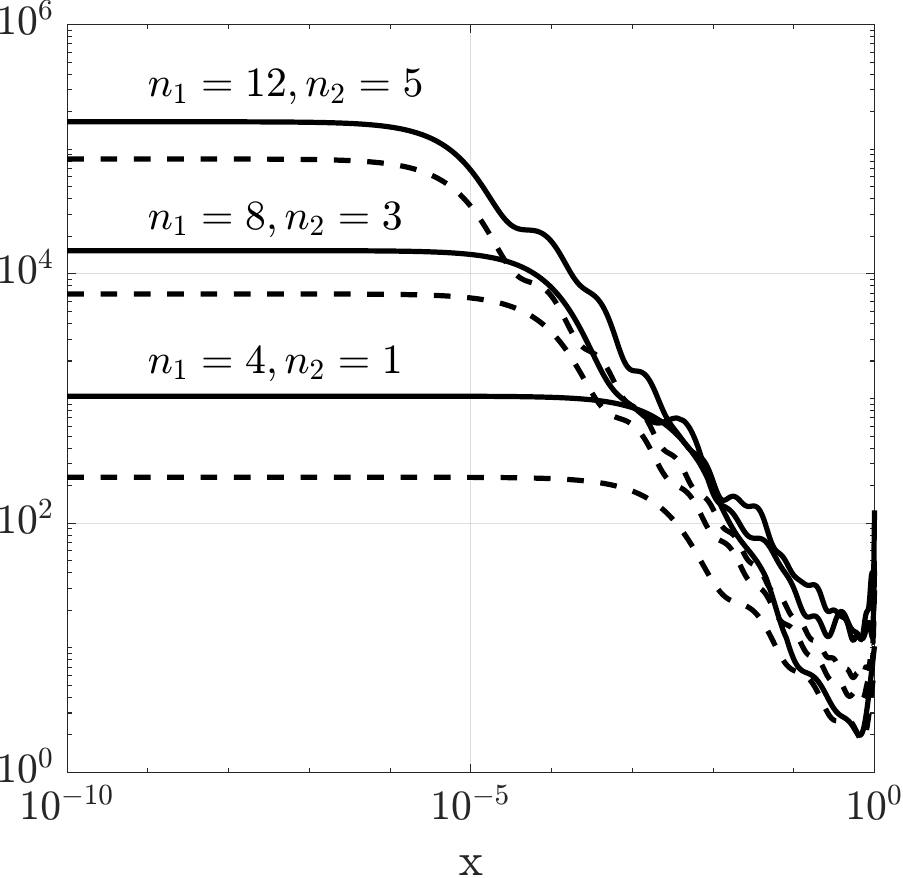}
        \caption*{(b.1) lightning approximation}
    \end{subfigure}\hfill
    \begin{subfigure}{.49\linewidth}
        \centering
        \includegraphics[width=.9\linewidth]{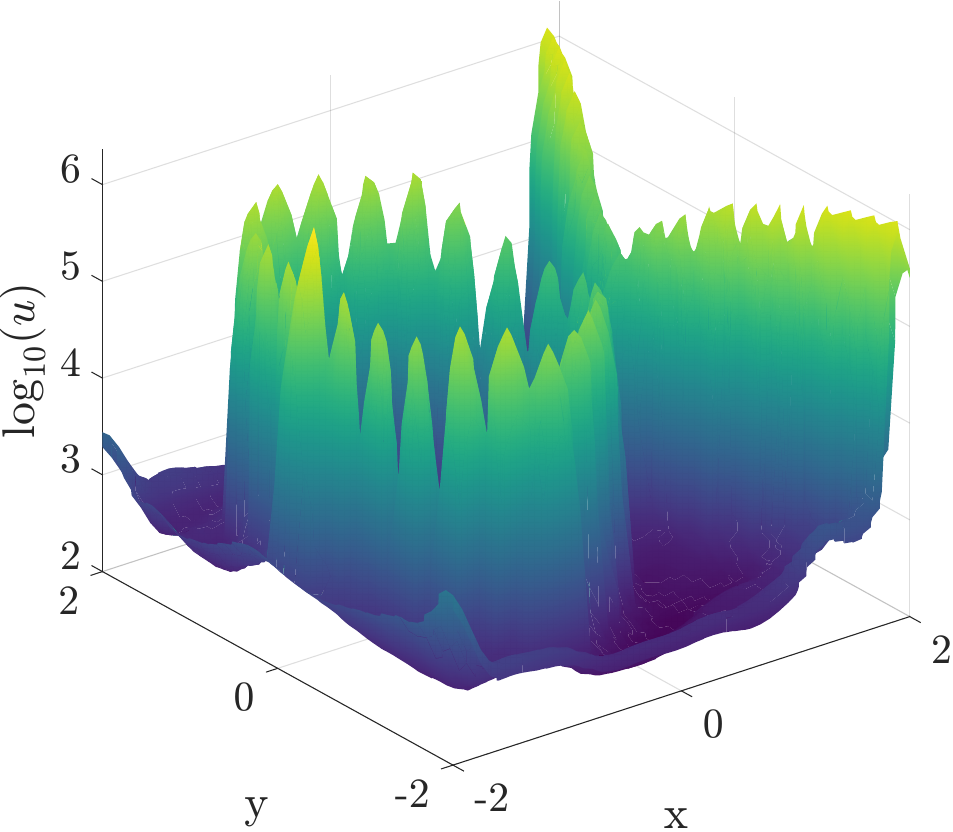}
        \caption*{(b.2) 2D lightning approximation}
    \end{subfigure}
    \begin{subfigure}{.49\linewidth}
        \centering
        \includegraphics[width=.9\linewidth]{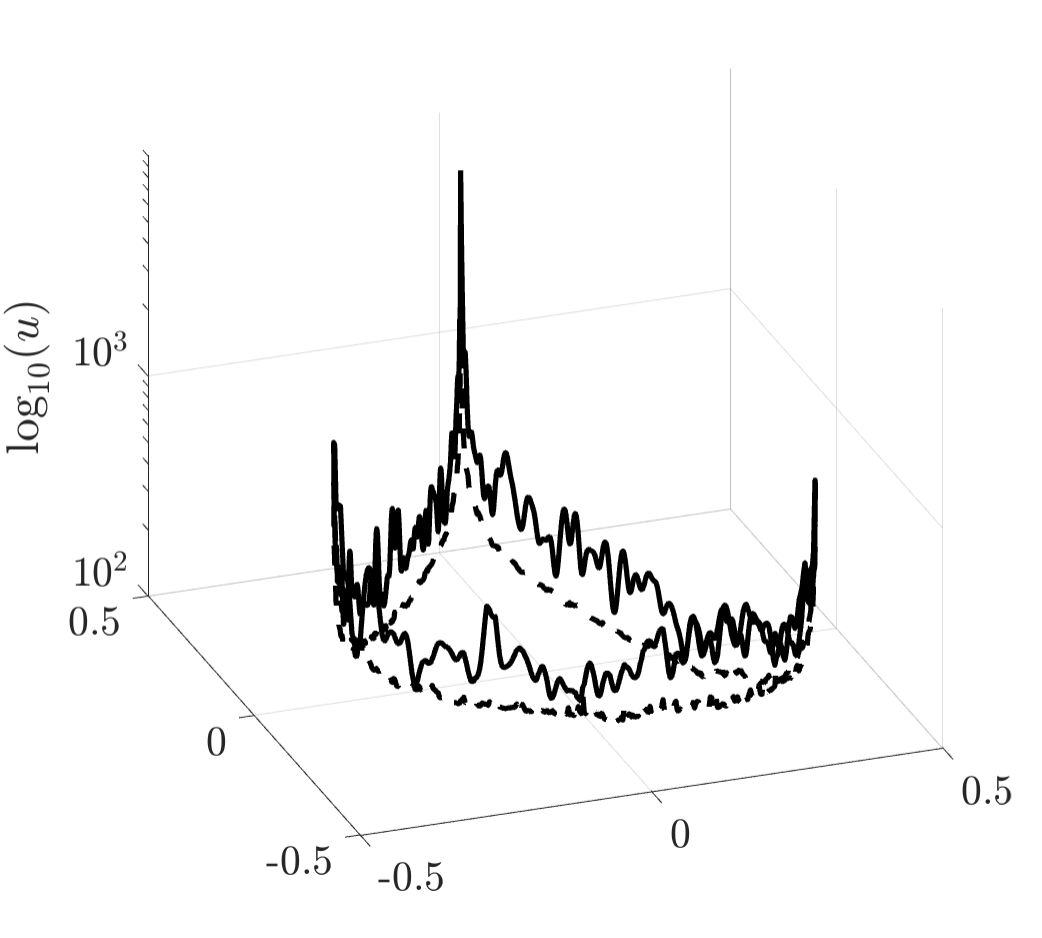}
        \caption*{(c) Fourier extension}
    \end{subfigure}\hfill
    \begin{subfigure}{.49\linewidth}
        \centering
        \includegraphics[width=.9\linewidth]{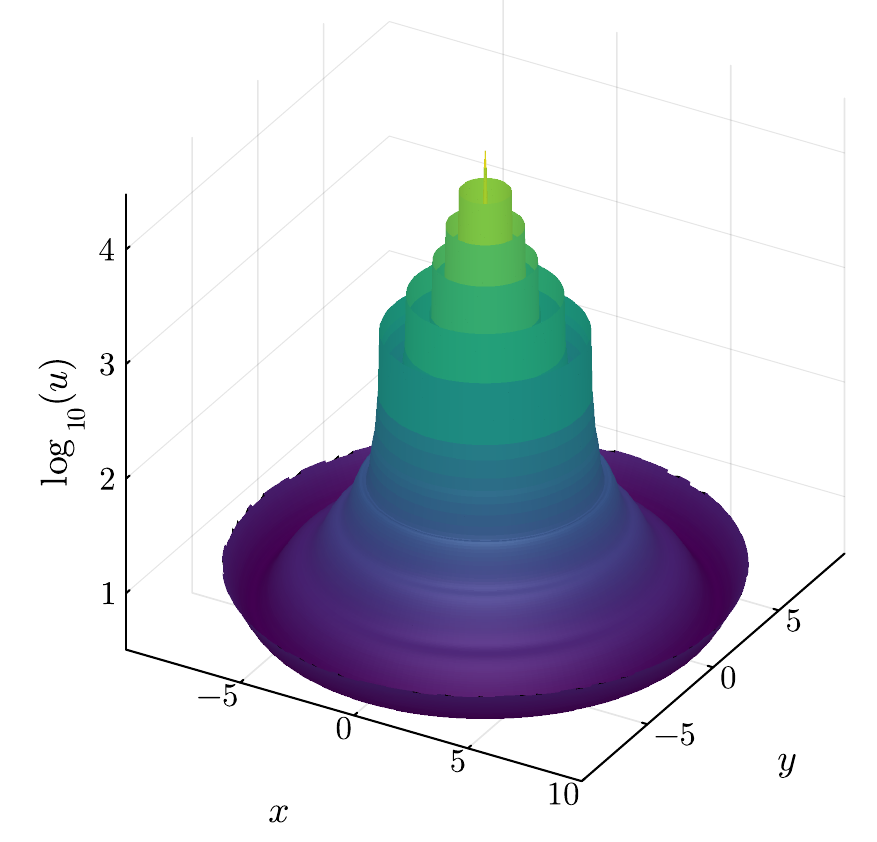}
        \caption*{(d) fractional spectral method}
    \end{subfigure}
    \caption{Illustration of the function $u$ computed by the RCS algorithm. Optionally, a comparison is made with an approximation to $k_n^\epsilon$ obtained via the dense grid method~\cite{dolbeault2022optimal,adcock2020near,migliorati2021multivariate} (dashed line) using a sufficiently dense grid. For more details, see the subsections in \S\ref{sec:numericalexperiments} corresponding to each plot label.}
    \label{fig:christoffelfun}
\end{figure}

\begin{figure}
    \begin{subfigure}{.49\linewidth}
        \centering
        \includegraphics[width=.9\linewidth]{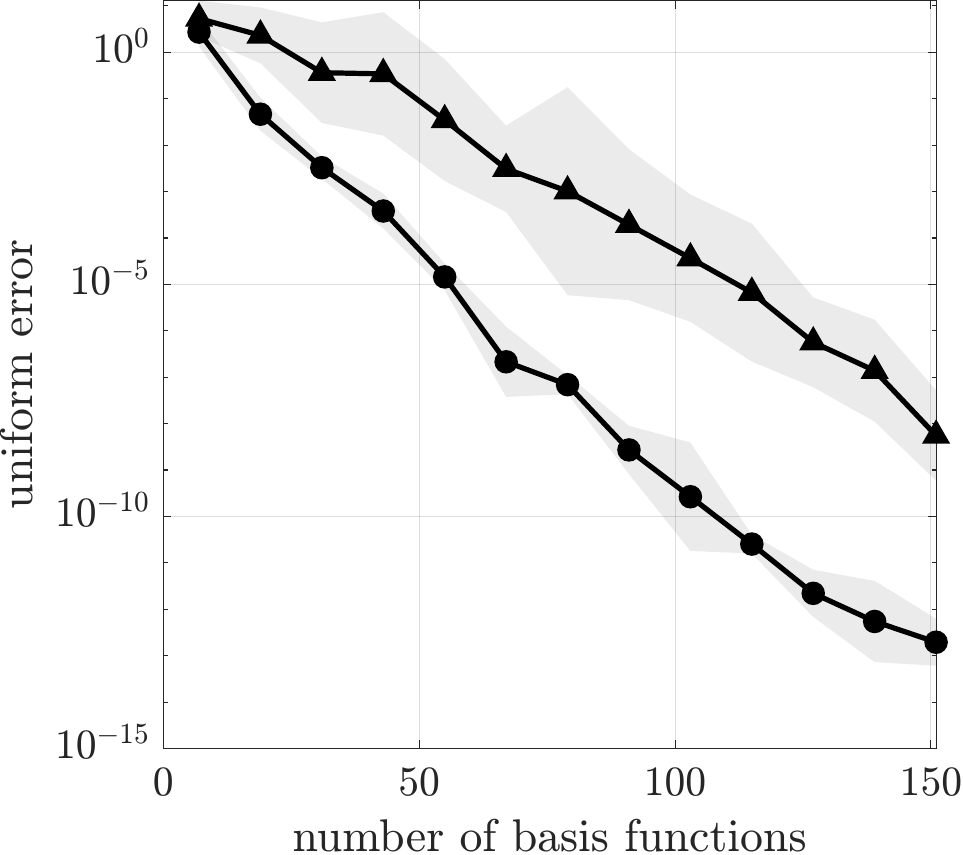}
        \caption{polynomials + weighted polynomials}
    \end{subfigure}\hfill
    \begin{subfigure}{.49\linewidth}
        \centering
        \includegraphics[width=.9\linewidth]{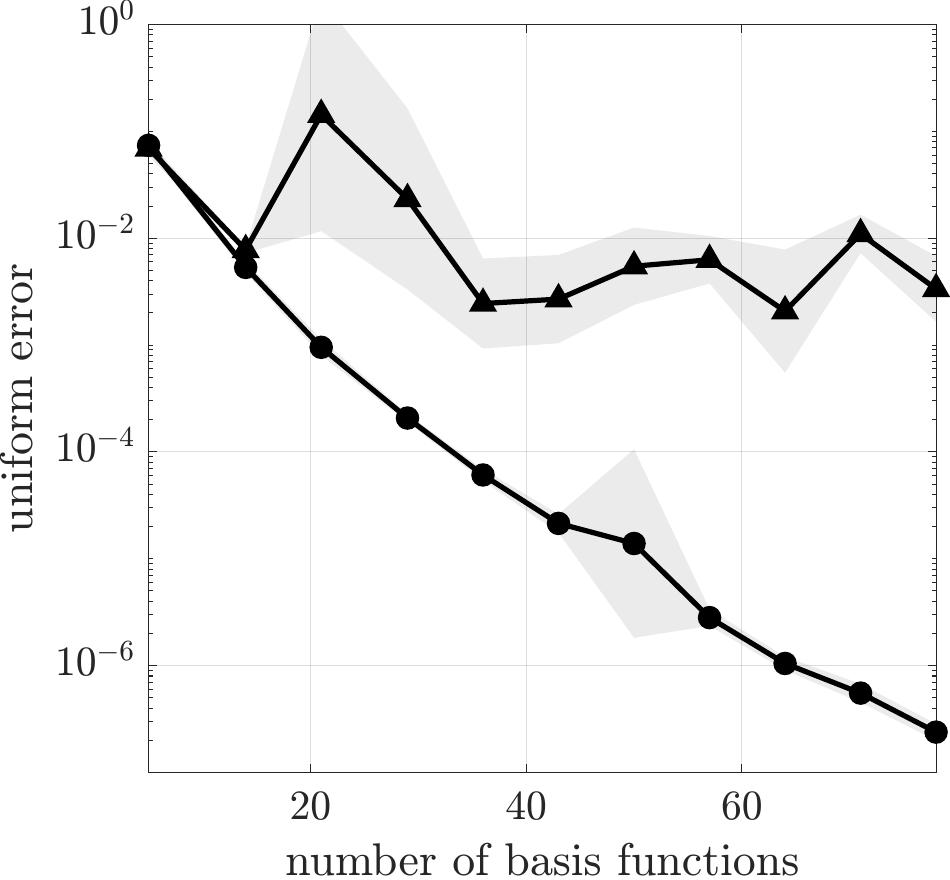}
        \caption*{(b.1) lightning approximation}
    \end{subfigure}
    \begin{subfigure}{.49\linewidth}
        \centering
        \includegraphics[width=.9\linewidth]{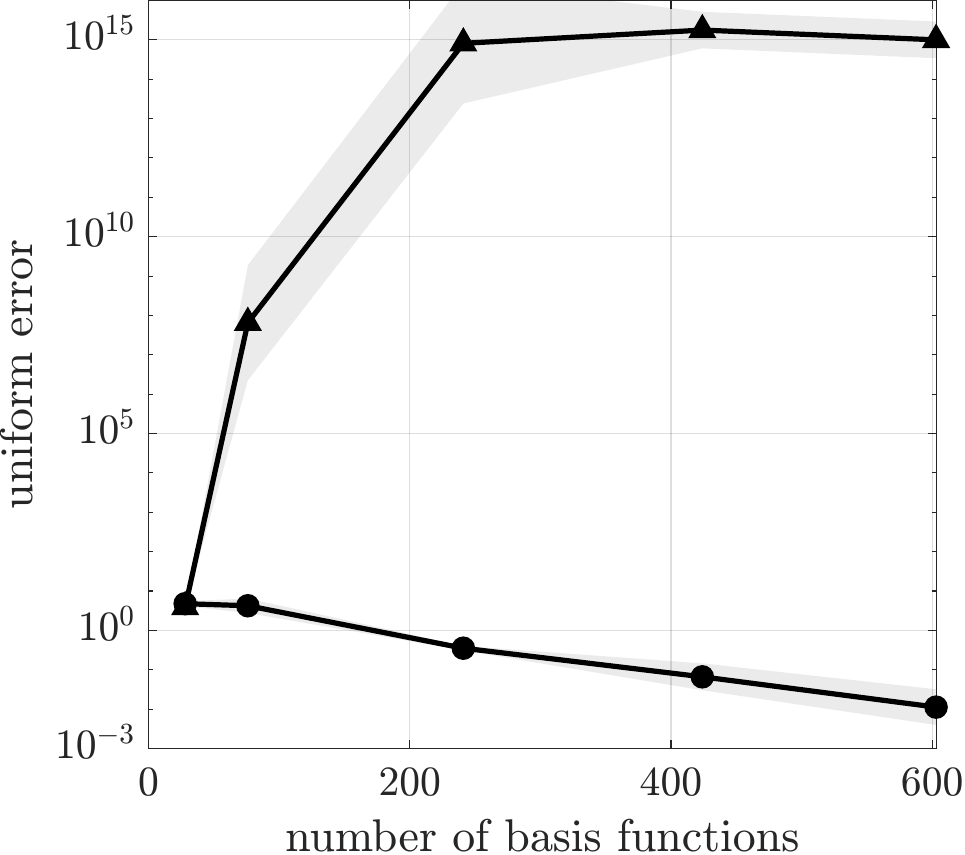}
        \caption*{(b.2) 2D lightning approximation}
    \end{subfigure}\hfill
    \begin{subfigure}{.49\linewidth}
        \centering
        \includegraphics[width=.9\linewidth]{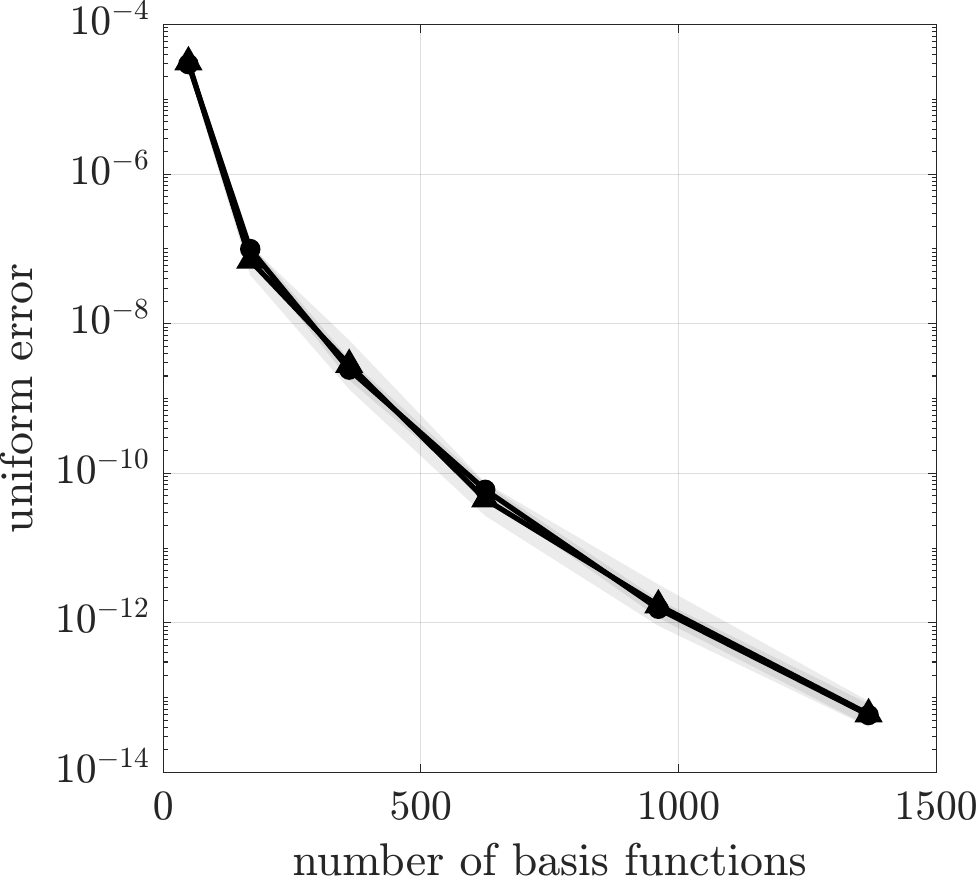}
        \caption*{(c) Fourier extension}
    \end{subfigure}
    \begin{subfigure}{\linewidth}
        \centering
        \includegraphics[width=.44\linewidth]{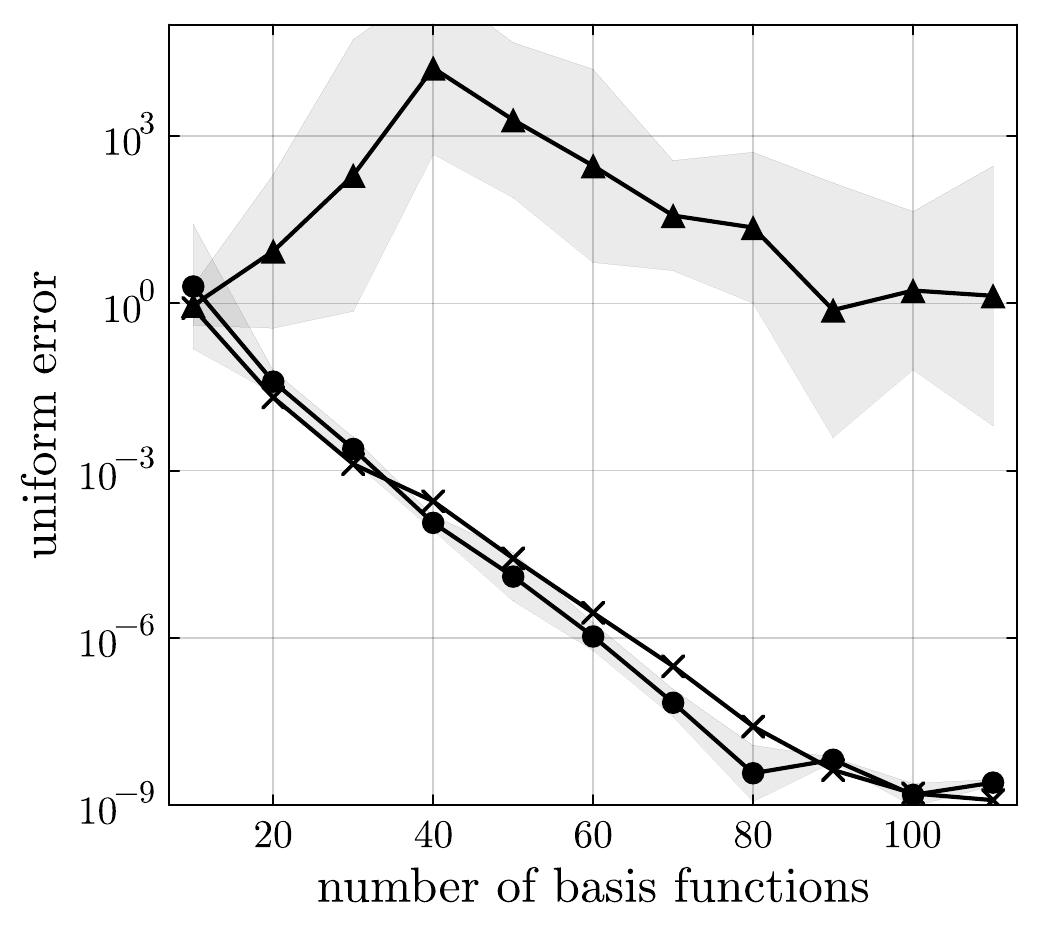}\hspace{9mm}
        \includegraphics[width=.44\linewidth]{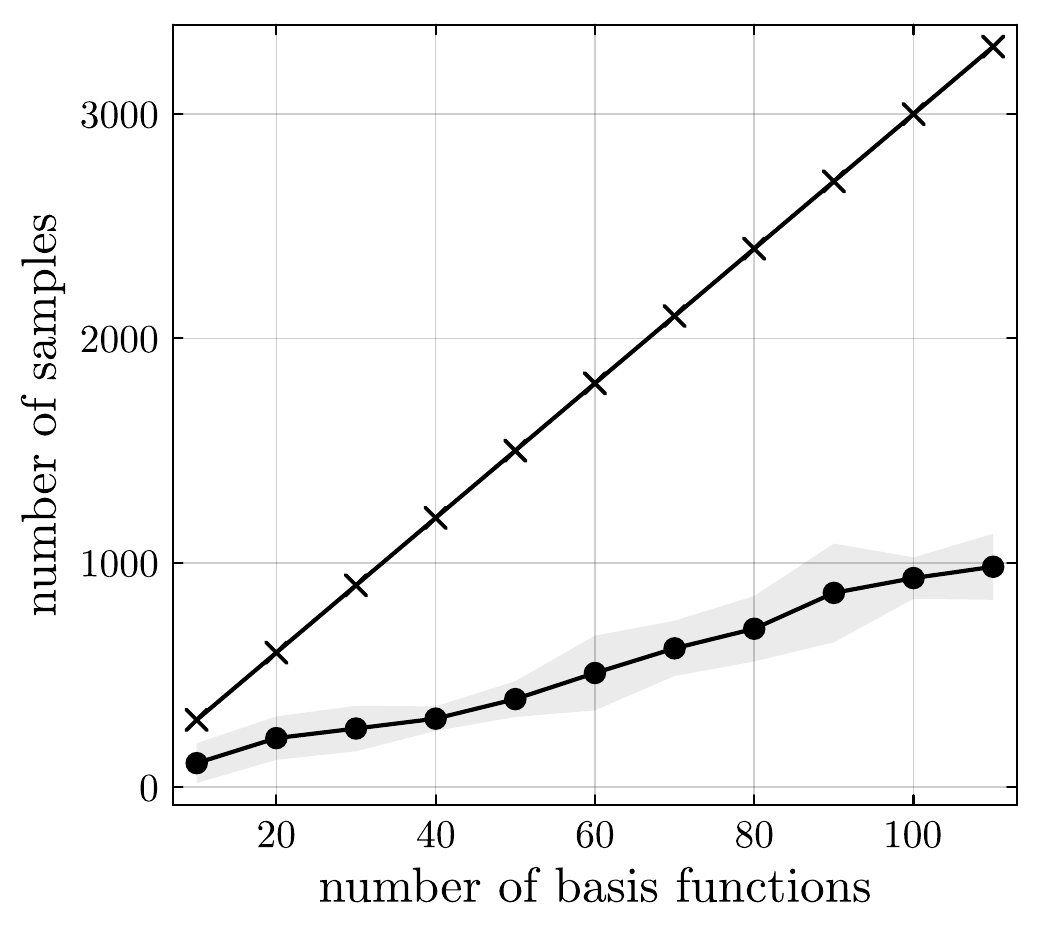}
        \caption*{(d) fractional spectral method}
    \end{subfigure}
    \caption{Convergence of discrete least squares approximation using samples computed by the RCS algorithm (circles) and the same number of uniformly random samples (triangles). The line indicates the geometric mean and the shaded area shows the variance over ten repetitions. The reported error is computed on an independent, dense grid. In (d), the deterministic sample strategy proposed in~\cite[\S7.4]{papadopoulos2023frame} is added to the comparison (crosses), including a comparison of the number of sample points. For more details, see the subsections in \S\ref{sec:numericalexperiments} corresponding to each plot label.}
    \label{fig:convergence}
\end{figure}

\begin{figure}
    \begin{subfigure}{.49\linewidth}
        \centering
        \includegraphics[width=.95\linewidth]{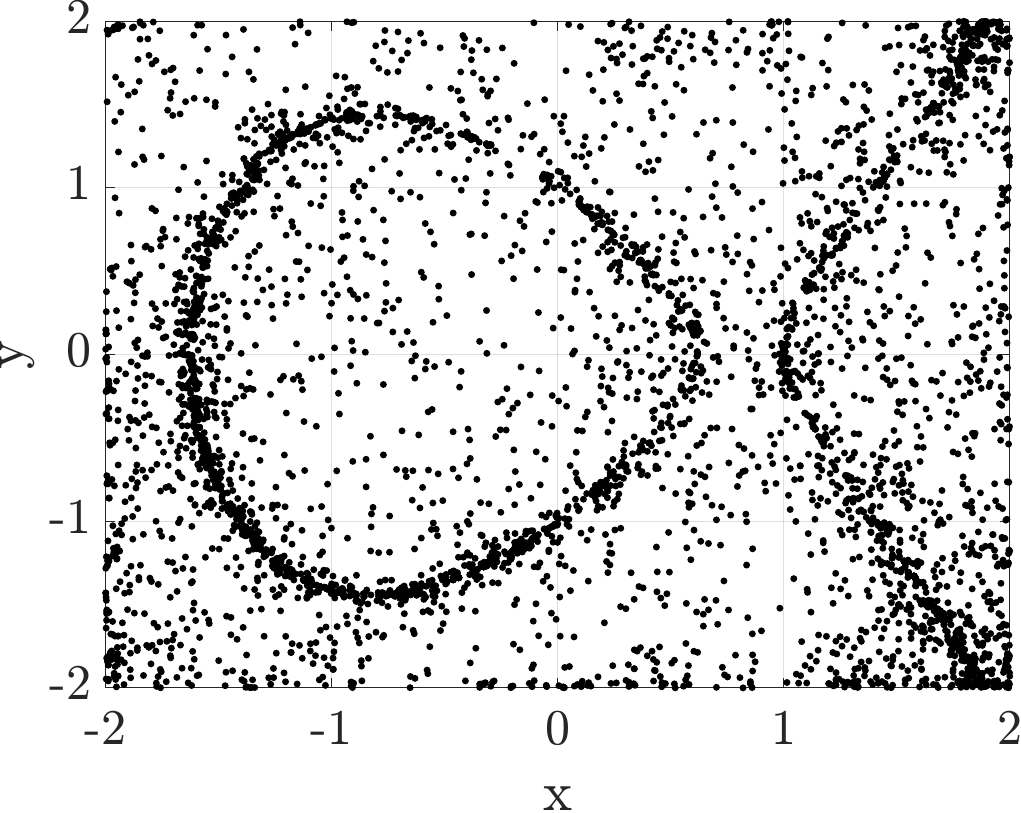}
        \caption*{(b.2) 2D lightning approximation}
    \end{subfigure}\hfill
    \begin{subfigure}{.49\linewidth}
        \centering
        \includegraphics[width=.95\linewidth]{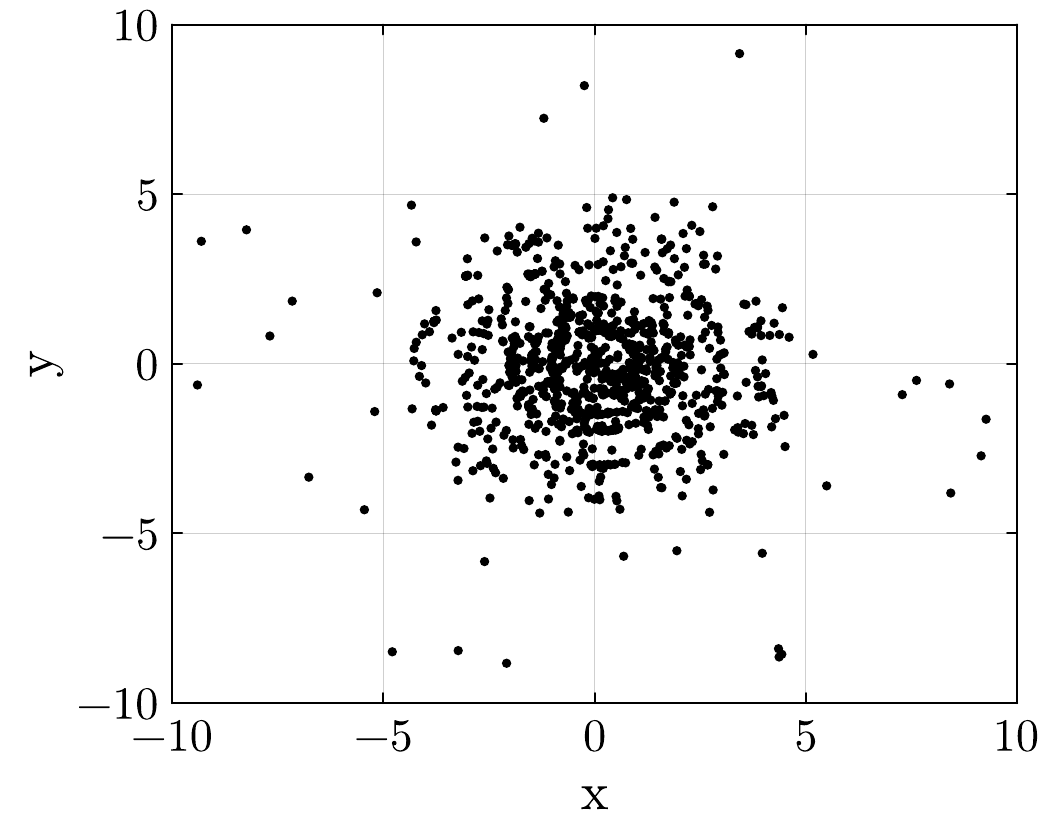}
        \caption*{(d) fractional spectral method}
    \end{subfigure}
    \caption{Illustration of the sample points outputted by the RCS algorithm. For more details, see the subsections in \S\ref{sec:numericalexperiments} corresponding to each plot label.}
    \label{fig:samples}
\end{figure}

\begin{figure}
    \centering
    \includegraphics[width=0.49\linewidth]{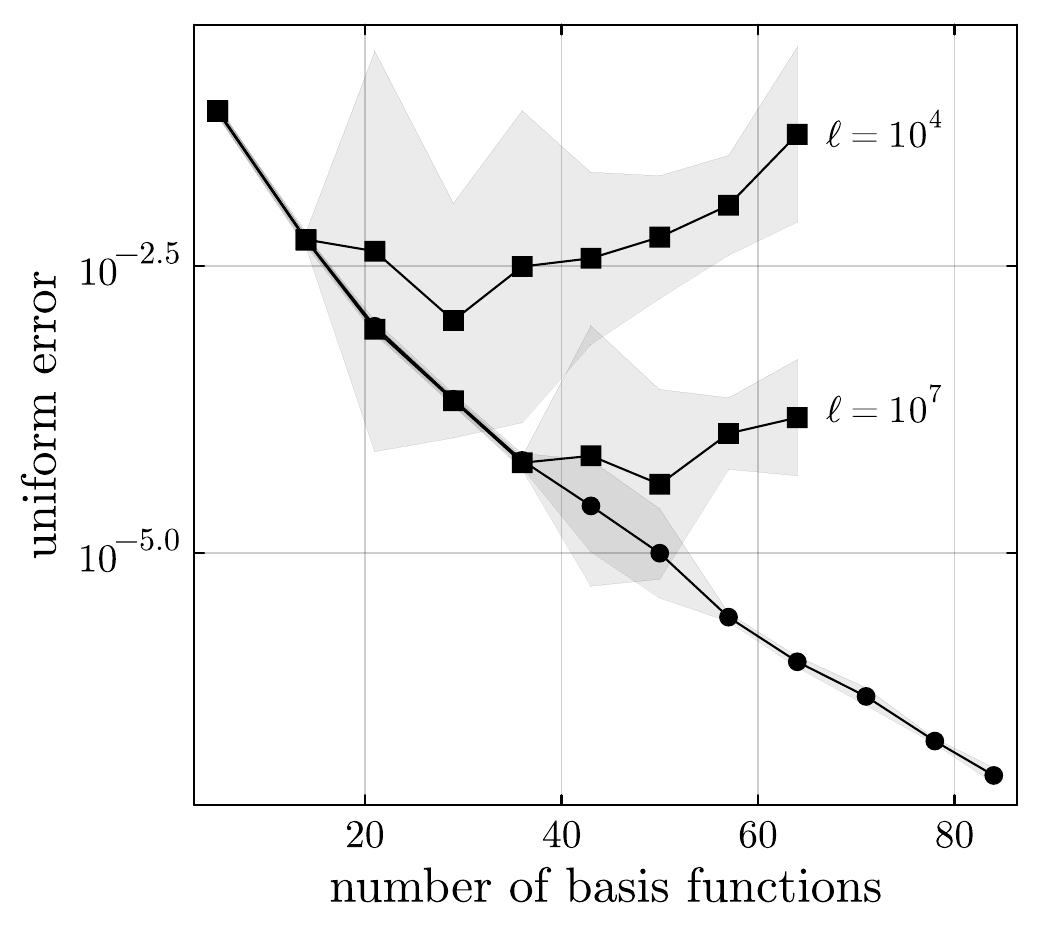}\hfill
    \includegraphics[width=0.49\linewidth]{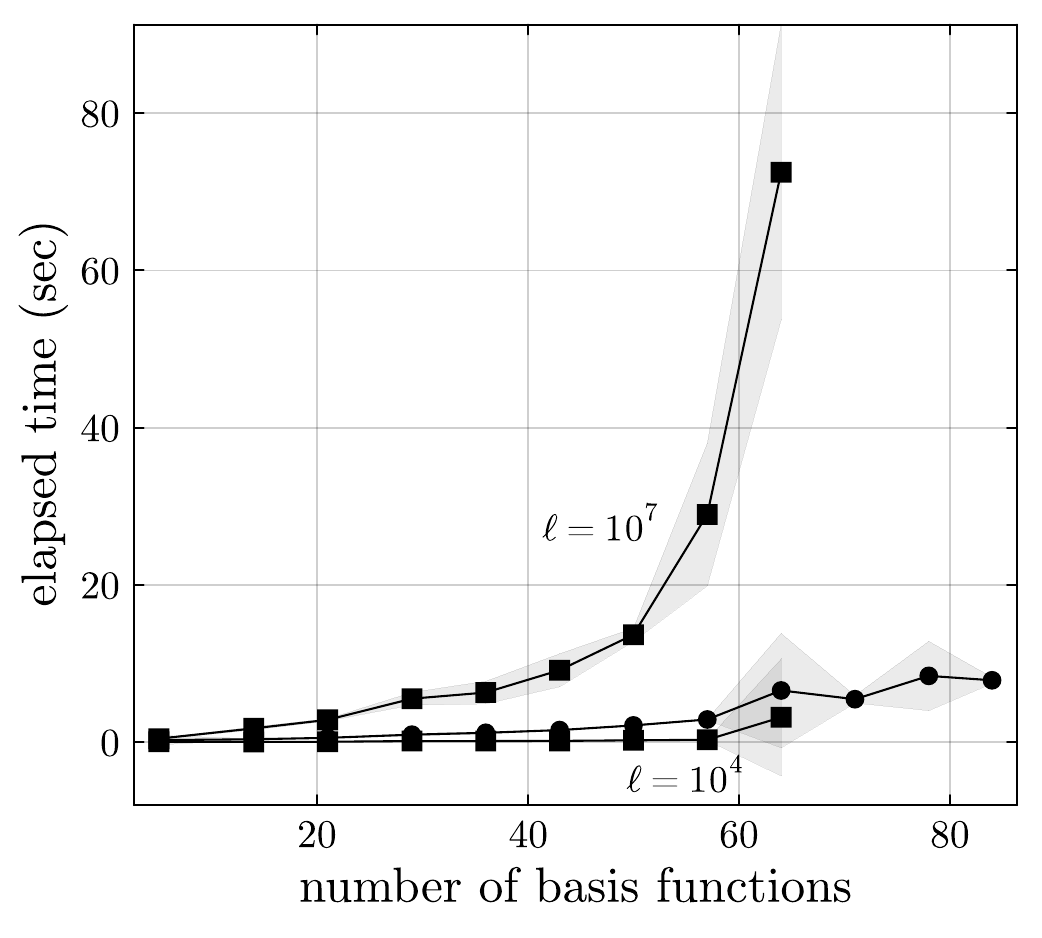}
    \caption{Comparison between the RCS algorithm (using $C_3 = 15$) and the discrete grid method~\cite{dolbeault2022optimal,adcock2020near,migliorati2021multivariate}, which approximates $k^\epsilon_n$ by discrete orthogonalization of the basis on a uniformly random grid of $\ell$ points, for the 1D lightning approximation. Left: error evaluated on an independent, dense grid; right: elapsed time required to compute sample points. The experiment was conducted on a contemporary laptop using a \texttt{Julia} implementation~\cite{github_repo}.}
    \label{fig:timing}
\end{figure}

\section{Conclusions} \label{sec:conclusion}
The refinement-based Christoffel sampling (RCS) algorithm provides an efficient and robust approach for discretizing non-orthogonal bases, yielding stable and accurate least-squares approximations. By iteratively refining a random sampling measure, the method naturally adapts to the underlying geometry of the basis while accounting for the effects of finite precision

There remain, however, several opportunities for further improvement. In the highly redundant regime, where the numerical dimension $n^\epsilon$ is much smaller than $n$, there still seems to be room for further computational savings. In particular, the evaluation of $u$ may be accelerated via randomized numerical linear algebra, since the matrix $A$ formed at each iteration is expected to be low rank and therefore suitable for oblivious dimensionality-reduction techniques~\cite{avron2017sharper}. Developing reliable and practical estimators for the numerical dimension—see also the discussion in \S\ref{sec:discussion}—is another important step toward making the scheme effective in this setting. Finally, a more general extension would be to adapt the algorithm to unbounded domains, for which $k_n^\epsilon$ may fail to be uniformly bounded.

\section*{Acknowledgments}
The authors wish to thank Daan Huybrechs and Christopher Musco for fruitful discussions on the topic. BA was supported by the the Natural Sciences and Engineering Research Council of Canada
(NSERC) through grant RGPIN-2021-611675. AH is a PhD fellow of the Research Foundation Flanders (FWO), funded by grant 11P2T24N, and was additionally supported by an FWO travel grant for a long stay abroad (V412525N).

\bibliographystyle{siamplain}
\bibliography{references}

\end{document}